\def\thesection{\arabic{section}}
\def\theequation{\thesection.\arabic{equation}}
\def\R{\mathbb{R}}
\newcommand{\noi} {\noindent}
\markboth{\small } {\small Degenerate elliptic problems}
\def\theequation{\@arabic{\c@section}.\@arabic{\c@equation}}
\def\N{{I\!\!N}}
\newtheorem{theorem}{Theorem}[section]
\newtheorem{lem}[theorem]{Lemma}
\newtheorem{prop}[theorem]{Proposition}
\newtheorem{rmk}[theorem]{Remark}
\newtheorem{defn}{Definition}[section]
\newtheorem{notn}{Notation}[section]
\begin{document}

{\vspace{0.01in}}

\title{Multiplicity of solutions to a class of degenerate elliptic equations in both sub-critical and critical cases}
\author{Kaushik Bal\footnote{Department of Mathematics and Statistics, Indian Institute of Technology Kanpur, Kanpur-208016, Uttar Pradesh, India, Email: kaushik@iitk.ac.in} and Sanjit Biswas\footnote{Department of Mathematics and Statistics, Indian Institute of Technology Kanpur, Kanpur-208016, Uttar Pradesh, India, Email: sanjitbiswas410@gmail.com}}

\maketitle

\begin{abstract}\noindent
Given a smooth, bounded domain $\Omega\subset\R^N$, we establish the existence of two non-trivial, non-negative solutions to the semilinear degenerate elliptic equation
\begin{align*}
    \left. \begin{array}{l}
        -\Delta_\lambda u=\mu g(z)|u|^{r-1}u+h(z)|u|^{s-1}u \;\text{in}\; \Omega \\
         u\in H^{1,\lambda}_0(\Omega)
    \end{array}\right\}
    \end{align*}
where $\Delta_\lambda=\Delta_x+|x|^{2\lambda}\Delta_y$ denotes the Grushin Laplacian Operator, $z=(x,y)\in\Omega$, $N=n+m;\, n,\, m\geq 1$, $\lambda>0$, $0\leq r<1<s<2^*_\lambda-1$ and $\mu$ is a positive parameter. The functions $g$ and $h$ may change sign and $2^*_\lambda=\frac{2Q}{Q-2}$ is the critical Sobolev exponent associated with the homogeneous dimension $Q=n+(1+\lambda)m$ of $\Delta_\lambda$. In the critical case $s=2^*_\lambda-1$, we further show that the problem admits at least two non-trivial, non-negative solutions under the additional assumptions $g\geq 0$ and $h\equiv 1$.
\end{abstract}

\noi {Keywords: Grushin operator, convex-concave nonlinearity, Nehari manifold technique, critical nonlinearity, Mountain pass theorem.}

\noi{\textit{2020 Mathematics Subject Classification: 35A15, 35J70, 35R01}

\bigskip

\tableofcontents

\section{Introduction and main results }
This article investigates the multiplicity of solutions to a class of semilinear degenerate elliptic equations. In particular, we study 
\begin{align}\label{ME}
    \left. \begin{array}{l}
        -\Delta_\lambda u=\mu g(z)|u|^{r-1}u+h(z)|u|^{s-1}u \;\text{in}\; \Omega \\
         u\in H^{1,\lambda}_0(\Omega)
    \end{array}\right\} 
\end{align}
where $\Delta_\lambda u=\Delta_x u+|x|^{2\lambda}\Delta_y u$ is the Grushin Laplacian, $z=(x,y)\in\Omega$, with $N=n+m;\, n,\, m\geq 1,\,\lambda>0$, $0\leq r<1<s<2^*_\lambda-1$ and $\mu$ is a positive parameter. The bounded domain $\Omega\subset \R^{N}$ is smooth, and the weight functions $g,\,h$ may change sign. When $\lambda=0$, the operator reduces to the usual Laplacian operator. This operator was first introduced by Baouendi in \cite{Baou} and later studied by Grushin \cite{Grushin70, Grushin71}. The hypoellipticity properties (see \cite{Hor}) of $\Delta_\lambda$ were analyzed by Grushin in \cite{Grushin70}. This operator arises naturally in the context of partial differential equations on manifolds. In \cite[Note 2]{KOG}, Kagoj-Lanconelli demonstrated that the Grushin operator also appears in the analysis of partial differential equations in the Heisenberg group. In addition, some connections between this operator and the PDEs in hyperbolic spaces have been explored in \cite{sandeep}. Since the weight $|x|^{2\lambda}$ appears in the definition of the Grushin operator, the operator fails to be uniformly elliptic on domains intersecting the plane $\{x=0\}$. We always assume that $$\Omega\cap\{z=(x,y)\in\R^N: x=0\}\neq\emptyset.$$ 

In \cite{FL}, Franchi-Lanconelli obtained several regularity and embedding results for a class of operators, including Grushin. Monticelli \cite{Monti} established a maximum principle, Hopf's lemma, and qualitative properties of solutions for degenerate equations, including the Grushin operator. Recently, a Berestycki-Lions type result has been proved by Alves et al. in \cite{Alves23}. Very recently, Alves et al. \cite{PL} investigated the existence result of the Brezis-Nirenberg type problem for this setting. More precisely, when $r\in[1,2^*_\lambda)$, they obtained the existence of a weak solution to \textcolor{black}{problem \eqref{ME}} with $s=2^*_\lambda-1$ under various conditions on the dimension and the parameter. Nevertheless, the equation has not yet been studied for the case $r\in[0,1)$, even for the subcritical case. This article addresses these gaps and discusses several multiplicity results for small parameter values $\mu>0$. For the Grushin operator, however, multiplicity phenomena of convex-concave type remain comparatively less understood, although several existence results may be found in \cite{KS, KOG}, and references therein. It should also be noted that for $n\geq 2$, no suitable maximum principle is available due to the effect of degeneracy, so we focus exclusively on nonnegative solutions. However, when $n=1$, a partial maximum principle has been established in \cite{KS}. For more problems related to the Grushin operator, we refer to \cite{WB01, Ambrosio04, Duong17, Duong, RM} and the references therein.

The motivation behind our study traces back to the classical work of  Ambrosetti, Brezis, and Cerami \cite{BR}, who considered the problem
\begin{align}\label{UE}
    \left. \begin{array}{l}
        -\Delta u=\mu g(x)u^r+h(x)u^s\; \text{in}\;\Omega\\
             u\geq 0\text{ in }\Omega \text{ and } u=0\;\text{on}\;\partial\Omega
    \end{array}\right\} 
\end{align}
with $g=h=1$ and proved the existence of $\mu_0>0$ such that equation (\ref{UE}) admits at least two positive solutions for every $\mu\in(0,\mu_0)$ when $0<r<1<s\leq 2^*-1$ among other results. For the sign-changing functions $g$ and $h$, De Figueiredo et al. \cite{DE} obtained multiple solutions by combining a mountain pass construction with local minimization.

\textcolor{black}{In the borderline case where $r=1$ and $\int h\phi_1^{s+1}<0$, Brown and Zhang \cite{BZ} proved the existence of multiple solutions for $\mu_1<\mu<\mu_1+\delta$, for some $\delta>0$. Here $(\phi_1,\mu_1)$ is the principal eigenpair of the Dirichlet boundary value problem $-\Delta u=\mu g(x)u$. Their approach utilized the Nehari manifold method in the subcritical setting. Wu \cite{WU1, WU} further investigated the problem and established the existence of at least two solutions to problem (\ref{UE}) with a sign-changing weight function $g$ and $h\equiv 1$ when $0<r<1<s<2^*-1$ and later under more general hypotheses on $g$ and $h$ for small $\mu>0.$}

Related results for quasilinear operators can be found in \cite{Adi1, BN, Pavel} and the references therein. Interested readers may also find \cite{ Michel, DE} useful, which deals with the impact of the concave and convex nonlinearities on the number of positive solutions.
We now mention structural assumptions on the weight functions $g$ and $h$.
\begin{enumerate}
    \item[($g_1$)] There exists $q\in(r+1, 2^*_\lambda)$ such that $g\in L^a(\Omega)$ where $a=\frac{q}{q-(r+1)}$ and $g^+$ is not identically zero function in $\Omega$. In case $r=0$, we also assume $g$ is non-negative in $\Omega$.
    \item[($g_2$)] There exists a $\rho-$ball (see \ref{gaugeb}) $B_{2R}(z_0)\Subset\Omega\setminus\Sigma$ such that $g\in L^\infty(B_R(z_0)),$ where $\Sigma =\{(x,y)\in\R^N| x=0\}.$
    \item[($h_1$)] There exists $p\in(s+1, 2^*_\lambda)$ such that $h\in L^b(\Omega)$ where $b=\frac{p}{p-(s+1)}$ and $h^+$ is not identically zero function in $\Omega$.
\end{enumerate}
Under these hypotheses, our main result reads as follows:
\begin{theorem}\label{MT1}(Multiplicity in the sub-critical regime).
    Assume $0\leq r<1<s<2^*_\lambda-1$ and that $(g_1)$, $(h_1)$ hold. Then there exists $\mu^*>0$ such that for every $\mu\in(0,\mu^*)$, the problem (\ref{ME}) possesses at least two non-trivial, non-negative weak solutions. Moreover, if $n=1,\lambda\geq 1$ and $g,h$ are non-negative, then these solutions are strictly positive.
\end{theorem}
 Motivated by \cite{WU1, WU}, we derive the existence of two solutions to problem (\ref{ME}), using the Nehari manifold technique. To this concern, the Nehari manifold is decomposed into three disjoint parts; one of these is empty for small parameter values, while each of the remaining two contains a critical point of the associated energy functional.
 
For the critical case, we study the equation
\begin{align}\label{ME1}
    \left. \begin{array}{l}
        -\Delta_\lambda u=\mu g|u|^{r-1}u+|u|^{2^*_\lambda-2}u \;\text{in}\; \Omega \\
         u\in H^{1,\lambda}_0(\Omega)
    \end{array}\right\} \tag{$E_\mu$}
\end{align}
 and obtain the following result:
\begin{theorem}\label{MT2}(Critical convex-concave case).
 Let $g\geq 0$ satisfy $(g_1)$ and $(g_2)$. Then there exists $\mu^*>0$ such that for every $\mu\in(0,\mu^*)$, the \textcolor{black}{problem (\ref{ME1})} with $0\leq r<1$ admits at least two non-trivial, non-negative weak solutions. Moreover, if $n=1\;,\lambda\geq 1$ then the solutions are strictly positive.     
\end{theorem}
 In order to prove Theorem \ref{MT2}, we mainly adopt the methods employed in \cite{FD, Silva}. We consider the associated $C^1-$ functional $I_\mu: H^{1,\lambda}_0(\Omega)\to\R$, 
 \begin{align}\label{CF}
     I_\mu(u)=\frac{1}{2}\int_\Omega |\nabla_\lambda u|^2\ dz-\frac{\mu}{r+1}\int_\Omega g\cdot (u^+)^{r+1}\ dz - \frac{1}{s+1}\int_\Omega (u^+)^{s+1}\ dz,
 \end{align}
and observe that every critical point of $I_\mu$ is non-negative, where $\nabla_\lambda u=(\nabla_x u, |x|^\lambda\nabla_y u)$. We show that the functional $I_\mu$ attains its local infimum in a neighborhood of the origin, yielding the first solution, while a second solution arises from the Mountain Pass Theorem. However, the lack of compactness of Palis-Smale sequences makes the analysis more delicate and prevents the direct application of the Mountain Pass Theorem. To overcome this difficulty, we prove that the mountain pass level is strictly less than a threshold $c^*$ such that the Palais-Smale condition is satisfied for every level strictly smaller than $c^*$ (see Lemma \ref{BrezisNiren1} and Lemma \ref{MPL}).

\begin{notn}
Throughout this paper, we will make use of the following notations.
\begin{itemize}
 \item $\Omega\subset\R^N$ is a bounded smooth domain, where $N=n+m;\, n, \, m\geq 1.$ 

 \item \textcolor{black}{By the notation $\omega\Subset\Omega$, we mean that $\omega$ is an open subset of $\Omega$ such that $\overline{\omega}\subset\Omega$.}

 \item $z=(x,y)$ denotes any arbitrary point in $\Omega$ and $Q=n+(1+\lambda)m$.

   \item  $\Sigma:=\{(x,\;y)\in\R^N|x=0\}$ is the degeneracy set for the Grushin operator.
    \item For $1\leq p\leq\infty$, the notation $\|u\|_p$ denotes the $L^p(\Omega)$ norm of the measurable function $u$, i.e.,$$\|u\|_p=\left (\int_\Omega |u|^p\, dz\right )^\frac{1}{p}.$$
    \item For a function $u$, we define $u^+:=\max{\{u,0\}}$ and $u^-=\max{\{-u, 0\}}$.
    \item $C,\, C_1,\, C_2,\;...$ denote positive constants, whose value may change from line to line.
    \item For $p>1$, $p'$ denotes the H\"{o}lder conjugate of $p$, defined as $p'=\frac{p}{p-1}$.
\end{itemize}
\end{notn}

\section{Preliminaries and variational framework}\label{Pri}
Let $\Omega\subset\R^{N(=n+m)}$ be a smooth bounded domain. The Grushin gradient of a function $u$ is denoted by $\nabla_\lambda u$ and defined as $\nabla_\lambda u=(\nabla_x u,\;|x|^\lambda \nabla_y u)$, where $\nabla_x u$ and $\nabla_y u$ are the usual gradients of the function $u$ with respect to $x$ and $y$ variables, respectively. Now, we define the Hilbert space associated to $\Delta_\lambda$ that will be used in the sequel  $$H^{1,\lambda}(\Omega):=\{ u\in L^2(\Omega): |\nabla_\lambda u|\in L^2(\Omega)\}$$
equipped with the norm
$$\|u\|=\left (\int_\Omega \left (u^2(z)+ |\nabla_\lambda u(z)|^2\right ) dz\right )^\frac{1}{2}.$$
 We also define the space $H^{1,\lambda}_0(\Omega)$ as closure of $C_c^\infty(\Omega)$ in $H^{1,\lambda}(\Omega)$. Due to \cite[Theorem 2.1]{MP09}, in the space $H^{1,\lambda}_0(\Omega)$, the above norm is equivalent to $$\|u\|_\lambda=\left (\int_\Omega |\nabla_\lambda u|^2 dz \right )^\frac{1}{2}.$$ 
 The gauge norm associated to the Grushin operator is denoted by $\rho$ and defined as 
 \begin{align}\label{gauge}
     \rho(z)=\left ( |x|^{2(1+\lambda)}+(1+\lambda)^2|y|^2\right )^\frac{1}{2(1+\lambda)},
 \end{align}
 which is homogeneous of degree one concerning the dilation $\delta_t(z)=(tx, t^{1+\lambda}y)$, where $z=(x,y)$ and $t>0$. The $\rho-$ball with radius $d>0$ and center at $z_1\in\R^N$ is denoted by $B_d(z_1)$ and defined as 
 \begin{align}\label{gaugeb}
     B_d(z_1):=\{z\in\R^N| \rho(z-z_1)<d\}.
 \end{align}
For more information about the Grushin operator and the Sobolev inequality associated with $\Delta_\lambda$, we refer to \cite{FL, KOG} and the references therein. We now state an embedding result, which can be found in \cite{KOG}.
\begin{lem}\label{Emb}
    Let $\Omega\subset\R^N$ be a smooth bounded domain. Then the embedding
    $H^{1,\lambda}_0(\Omega)\hookrightarrow L^p(\Omega)$ is continuous for every $p\in [1, 2^*_\lambda]$. Moreover, the above embedding is compact for all $p\in [1, 2^*_\lambda)$.
\end{lem}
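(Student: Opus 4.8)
The plan is to split the statement into three parts, treated in increasing order of difficulty: the continuous embedding at the critical exponent $p=2^*_\lambda$, the continuous embedding for all subcritical $p$, and finally the compactness for $p<2^*_\lambda$. The cornerstone is the Sobolev inequality attached to the Grushin gradient, namely that there is a constant $S=S(N,\lambda)>0$ with
\[
\|u\|_{2^*_\lambda}\le S\,\|D_\lambda u\|_{L^2(\R^N)}\qquad\text{for all }u\in C_c^\infty(\R^N),
\]
which I would import in the form available in \cite{FL, KOG}. Extending this by density to $H^{1,\lambda}_0(\Omega)$ (legitimate since $C_c^\infty(\Omega)$ is dense by the very definition of the space, and since $\|\cdot\|_\lambda$ is an equivalent norm by \cite[Theorem 2.1]{MP09}) yields $\|u\|_{2^*_\lambda}\le S\,\|u\|_\lambda$, which is exactly the continuous embedding into $L^{2^*_\lambda}(\Omega)$.

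The second part is purely measure-theoretic. Since $\Omega$ is bounded, $|\Omega|<\infty$, so for any $1\le p\le 2^*_\lambda$ Hölder's inequality gives $\|u\|_p\le |\Omega|^{\frac1p-\frac1{2^*_\lambda}}\,\|u\|_{2^*_\lambda}$; composing with the critical embedding from the previous paragraph produces continuity of $H^{1,\lambda}_0(\Omega)\hookrightarrow L^p(\Omega)$ for every $p\in[1,2^*_\lambda]$.

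The genuine content is the compactness for $p<2^*_\lambda$, and my strategy is to reduce it to compactness into $L^2$ followed by an interpolation argument. First I would establish that $H^{1,\lambda}_0(\Omega)\hookrightarrow L^2(\Omega)$ is compact; granting this, fix $2\le p<2^*_\lambda$ and choose $\theta\in[0,1)$ with $\frac1p=\frac{1-\theta}{2}+\frac{\theta}{2^*_\lambda}$, so that
\[
\|v\|_p\le \|v\|_2^{\,1-\theta}\,\|v\|_{2^*_\lambda}^{\,\theta}.
\]
Given a bounded sequence $(u_k)$ in $H^{1,\lambda}_0(\Omega)$, I pass to a subsequence converging weakly in $H^{1,\lambda}_0(\Omega)$ and strongly in $L^2(\Omega)$ (the latter by the $L^2$-compactness); the critical embedding keeps $(u_k)$ bounded in $L^{2^*_\lambda}(\Omega)$, so applying the displayed inequality to the differences $u_k-u_j$ (or $u_k-u$) forces strong convergence in $L^p(\Omega)$. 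For $1\le p<2$ the conclusion is immediate from the $L^2$-compactness together with Hölder on the bounded domain.

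The main obstacle is precisely this base case, the compactness of $H^{1,\lambda}_0(\Omega)\hookrightarrow L^2(\Omega)$, because $\Delta_\lambda$ degenerates on $\Sigma=\{x=0\}$ and the classical Rellich–Kondrachov argument relies on uniform ellipticity. I would attempt to secure a subelliptic gain, i.e.\ a genuine fractional Sobolev estimate $\|u\|_{H^\sigma(\Omega)}\le C\,\|u\|_\lambda$ for some $\sigma>0$, after which the standard compactness of $H^\sigma(\Omega)\hookrightarrow L^2(\Omega)$ on a bounded domain delivers the result directly; as a fallback I would run a Fréchet–Kolmogorov–Riesz argument, controlling the $L^2$-norm of the anisotropic translates $u(\delta_t(\cdot))-u(\cdot)$ by the degenerate gradient, with the $y$-increments weighted by $|x|^\lambda$. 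The handling of the translations near $\Sigma$, where the weight degenerates, is where the estimate is most delicate, and I expect this step to be the crux of the whole lemma.
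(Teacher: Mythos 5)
There is a genuine gap, and it sits exactly where you predicted it would: the compactness. Your first two parts (the critical continuous embedding imported from \cite{FL,KOG} plus H\"older on the bounded domain) and your interpolation reduction of $L^p$-compactness, $2\le p<2^*_\lambda$, to $L^2$-compactness are all correct but routine. The entire mathematical content of the lemma is the compactness statement, and your treatment of its base case --- $H^{1,\lambda}_0(\Omega)\hookrightarrow L^2(\Omega)$ compact --- is not a proof but a plan: ``I would attempt to secure a subelliptic gain \dots as a fallback I would run a Fr\'echet--Kolmogorov--Riesz argument,'' with the admission that the translation estimates near $\Sigma$ are unresolved. Neither strategy is executed, and neither is easy: a uniform fractional gain $\|u\|_{H^\sigma}\le C\|u\|_\lambda$ across the degeneracy set is a nontrivial subelliptic estimate, and the anisotropic-translate route is precisely where the degeneration bites. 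For comparison, the paper does not prove the lemma at all; it quotes it from \cite{KOG}. So a blind proof must either do the same (cite the known result, which would make your write-up complete) or actually close this step.

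If you want to close it yourself, there is a more elementary route than either of your two candidates, using only tools you already invoked. Since $\Sigma\cap\Omega$ is a lower-dimensional plane, for any $\delta>0$ you can pick an open neighborhood $U_\delta$ of $\Sigma\cap\Omega$ with $|U_\delta|<\delta$ such that $V_\delta:=\Omega\setminus \overline{U_\delta}$ is Lipschitz and $|x|\ge c_\delta>0$ on $V_\delta$. On $V_\delta$ the Grushin gradient controls the full Euclidean gradient, so a bounded sequence in $H^{1,\lambda}_0(\Omega)$ is bounded in $H^1(V_\delta)$ and classical Rellich--Kondrachov gives a subsequence converging strongly in $L^p(V_\delta)$. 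On $U_\delta$, H\"older against the uniform $L^{2^*_\lambda}$ bound gives $\|u_k\|_{L^p(U_\delta)}\le \|u_k\|_{2^*_\lambda}\,|U_\delta|^{\frac1p-\frac1{2^*_\lambda}}$, which is uniformly small because $p<2^*_\lambda$ makes the exponent positive. A diagonal argument over $\delta=1/j$ then produces a subsequence converging in $L^p(\Omega)$, with no subelliptic estimates and no translation analysis near $\Sigma$ required.
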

We denote $S_p$ as the best constant in the embedding $H^{1,\lambda}_0(\Omega)\hookrightarrow L^p(\Omega)$ and
define \textcolor{black}{
\begin{align*}
    \mathcal{S}_\lambda=\inf\Big\{\int_\Omega |\nabla_\lambda u|^2\, dz: u\in H^{1,\lambda}_0(\Omega), \|u\|_{L^{2^*_\lambda}(\Omega )}=1\Big\},
\end{align*}}
which is independent of $\Omega$ (see \cite[Proposition 3.3]{PL}) and \textcolor{black}{is achieved in the} case of $\Omega=\R^{n+m}$. That is, there exists a non-negative $v\in H^{1,\lambda}_0(\Omega)$ satisfying the following equation:
\begin{align}\label{Tal}
    -\Delta_\lambda v=v^\frac{Q+2}{Q-2} \text{ in } \R^N,
\end{align}
where $Q=n+(1+\lambda)m$ is the homogeneous dimension of $\Delta_\lambda$.  For $\epsilon>0$, let us define $$v_\epsilon=\epsilon^\frac{2-Q}{2}v(\delta_{\frac{1}{\epsilon}}(z)),$$ which is also a solution to the above equation (\ref{Tal}). Let us consider the $\rho-$ball $B_{2R}(z_0)\Subset\Omega\setminus\Sigma$, which is introduced in the condition ($g_2$). Choose a function $\phi\in C^\infty_c(B_{2R}(z_0))$ such that $0\leq \phi\leq 1$ in $B_{2R}(z_0)$ and $\phi=1$ in $B_R(z_0)$. Now, define
\begin{align}\label{Asy}
    u_\epsilon(z)=\phi(z)v_\epsilon(z) \text{ and } w_\epsilon=\frac{u_\epsilon}{\|u_\epsilon\|_{\textcolor{black}{L^{2^*_\lambda}(\Omega)}}}.
\end{align}
To proceed, let us discuss a crucial asymptotic result that will play a significant role in establishing Theorem \ref{MT2}. The result is outlined as follows (see \cite{PL})
\begin{prop}\label{Pro1}
    As $\epsilon\to 0$, the following conclusions hold:
    \begin{enumerate}
        \item[(i)]$\int_\Omega |\nabla_\lambda u_\epsilon|^2\, dz\leq \mathcal{S}_\lambda^\frac{Q}{2}+O(\epsilon^{Q-2}),$
        \item [(ii)]$\int_\Omega |u_\epsilon|^{2^*_\lambda}\, dz=\mathcal{S}_\lambda^\frac{Q}{2}+O(\epsilon^Q)$,
        \item[(iii)] $\int_\Omega |u_\epsilon|^2 dz\geq\begin{cases}
            C\epsilon^2+O(\epsilon^{Q-2}) \text{ if } Q>4\\
          C\epsilon^2|\mathrm{ln}\epsilon|+O(\epsilon^2) \text{ if } Q=4\\
          C\epsilon^{Q-2}+O(\epsilon^2) \text{ if } Q<4.
        \end{cases}$
        \item[(iv)]For $\frac{2^*_\lambda}{2}<\gamma<2^*_\lambda$, we have
       $$\int_\Omega |u_\epsilon|^\gamma\, dz=O(\epsilon^{Q-\frac{\gamma(Q-2)}{2}}).$$
    \end{enumerate}
\end{prop}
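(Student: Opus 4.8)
The plan is to reduce all four estimates to the exact scaling identities satisfied by the entire bubble $v_\epsilon$ together with the pointwise decay of the extremal $v$, with the cutoff $\phi$ contributing only controllable error terms on the annulus $B_{2R}(z_0)\setminus B_R(z_0)$. Two facts are used repeatedly. First, since $v$ solves \eqref{Tal}, testing against $v$ and using $\frac{Q+2}{Q-2}+1=2^*_\lambda$ gives $\int_{\R^N}|D_\lambda v|^2=\int_{\R^N}v^{2^*_\lambda}$; combined with the sharp Sobolev inequality attained by $v$ this yields $\int_{\R^N}|D_\lambda v|^2=\int_{\R^N}v^{2^*_\lambda}=\mathcal{S}_\lambda^{Q/2}$, and by the scale invariance of both quantities under $\delta_t$ the same holds with $v$ replaced by $v_\epsilon$ for every $\epsilon>0$. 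Second, I would invoke the anisotropic decay $v(z)\lesssim\rho(z)^{-(Q-2)}$ and $|D_\lambda v(z)|\lesssim\rho(z)^{-(Q-1)}$ at infinity, which after the dilation $w=\delta_{1/\epsilon}(z)$ (Jacobian $\epsilon^Q$, with $\rho$ homogeneous of degree one) transfers the region $\{\rho(z-z_0)<R\}$ to $\{\rho(w)\lesssim R/\epsilon\}$.

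For (i) and (ii) I would expand $|D_\lambda(\phi v_\epsilon)|^2=\phi^2|D_\lambda v_\epsilon|^2+2\phi v_\epsilon\,D_\lambda\phi\cdot D_\lambda v_\epsilon+v_\epsilon^2|D_\lambda\phi|^2$, note that $\phi\equiv1$ on $B_R(z_0)$ kills the last two terms there, and estimate the remaining contributions on the annulus where $D_\lambda\phi$ is supported. Since $v_\epsilon$ concentrates at $z_0$, on that annulus both $v_\epsilon$ and $|D_\lambda v_\epsilon|$ are $\lesssim\epsilon^{(Q-2)/2}$, so after integration every cross and cutoff term is $O(\epsilon^{Q-2})$; together with $\int\phi^2|D_\lambda v_\epsilon|^2\le\int_{\R^N}|D_\lambda v_\epsilon|^2=\mathcal{S}_\lambda^{Q/2}$ this gives (i). For (ii) one has $\int_\Omega(\phi v_\epsilon)^{2^*_\lambda}=\int_{\R^N}v_\epsilon^{2^*_\lambda}-\int_{\R^N\setminus B_R(z_0)}v_\epsilon^{2^*_\lambda}+(\text{annulus term})$; changing variables and using $v^{2^*_\lambda}\lesssim\rho^{-2Q}$ shows the tail $\int_{\rho(w)\gtrsim R/\epsilon}v^{2^*_\lambda}\,dw=O((R/\epsilon)^{-Q})=O(\epsilon^Q)$, which dominates the annulus term and gives the claimed error.

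Estimates (iii) and (iv) follow from one computation. Writing $\int_\Omega|u_\epsilon|^\beta\ge\int_{B_R(z_0)}v_\epsilon^\beta$ (or $\le\int_{B_{2R}(z_0)}v_\epsilon^\beta$ for an upper bound) and substituting $w=\delta_{1/\epsilon}(z)$ gives $\epsilon^{Q-\beta(Q-2)/2}\int_{\rho(w)\lesssim R/\epsilon}v(w)^\beta\,dw$, since $(2-Q)\beta/2+Q=Q-\beta(Q-2)/2$. In the homogeneous polar coordinates adapted to $\delta_t$ the measure scales like $\rho^{Q-1}\,d\rho$, so with $v^\beta\lesssim\rho^{-(Q-2)\beta}$ the radial integrand behaves like $\rho^{Q-1-(Q-2)\beta}$. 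For (iv) the hypothesis $\gamma>2^*_\lambda/2=Q/(Q-2)$ is exactly the condition $(Q-2)\gamma>Q$ making this integral convergent at infinity, so $\int_{\rho\lesssim R/\epsilon}v^\gamma$ stays bounded and the prefactor $\epsilon^{Q-\gamma(Q-2)/2}$ is the full size. For (iii) with $\beta=2$ the radial exponent is $3-Q$, and integrating $\int^{R/\epsilon}\rho^{3-Q}\,d\rho$ produces the three regimes: a convergent integral giving a constant $C$ up to a tail $O(\epsilon^{Q-4})$ when $Q>4$, a logarithm $|\ln\epsilon|$ when $Q=4$, and a divergent $(R/\epsilon)^{4-Q}$ when $Q<4$; multiplying by $\epsilon^2$ reproduces the three cases, with the stated $O(\epsilon^2)$ or $O(\epsilon^{Q-2})$ remainders coming respectively from the bounded near-origin part and from the discarded tail. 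I expect (iii) to be the main obstacle: it is the only estimate requiring a two-sided bound, hence a matching lower decay $v(z)\gtrsim\rho(z)^{-(Q-2)}$, and it demands tracking the subleading remainder carefully across the three dimensional regimes rather than merely extracting a leading order.
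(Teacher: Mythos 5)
Your treatment of (iv) is essentially the paper's own proof: the paper's argument for (iv) is precisely the dilation change of variables $\xi=\delta_{1/\epsilon}(z)$ with Jacobian $\epsilon^Q$, a split into the inner ball and the annulus where $\phi\neq 1$, and the bound $C\epsilon^{Q-\frac{\gamma(Q-2)}{2}}\int_{\R^N}v^\gamma\,d\xi$, with $v\in L^\gamma(\R^N)$ justified exactly by your observation that $\gamma>\frac{2^*_\lambda}{2}$ makes $(Q-2)\gamma>Q$. The genuine difference is in (i)--(iii): the paper does not prove these at all, it quotes them from \cite[Proposition 4.2]{PL}, whereas you reconstruct them by the standard Brezis--Nirenberg computation. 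Your skeleton there is right --- the identity $\int_{\R^N}|D_\lambda v|^2=\int_{\R^N}v^{2^*_\lambda}=\mathcal{S}_\lambda^{Q/2}$ from testing \eqref{Tal} plus sharpness, the cutoff expansion, and the three regimes of (iii) with their remainders all come out correctly. But note what this route costs: every estimate passes through pointwise decay bounds $v\lesssim\rho^{-(Q-2)}$, $|D_\lambda v|\lesssim\rho^{-(Q-1)}$, and, for (iii), the matching lower bound $v\gtrsim\rho^{-(Q-2)}$. In the Euclidean case these are free from the explicit Talenti bubble; for the Grushin extremal no explicit formula is available in general, and these decay estimates are themselves substantive theorems --- they are exactly the content the paper outsources to \cite{PL}. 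So your argument is not more self-contained than the paper's; it relocates the citation one level down. (A small improvement: the gradient decay is avoidable by writing the cross term as $\tfrac{1}{2}\int D_\lambda(v_\epsilon^2)\cdot D_\lambda(\phi^2)\,dz$ and integrating by parts, so that only the size of $v_\epsilon$ on the annulus is needed.)

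One step is wrong as written and needs fixing. You dilate by $w=\delta_{1/\epsilon}(z)$ and assert that this carries $\{\rho(z-z_0)<R\}$ onto $\{\rho(w)\lesssim R/\epsilon\}$. Grushin dilations are centered at the origin, and translations in the $x$-variable are not symmetries of $\Delta_\lambda$; if $z_0\neq 0$, the image of $B_R(z_0)$ under $\delta_{1/\epsilon}$ lies at gauge distance of order $\rho(z_0)/\epsilon$ from the origin, hence captures none of the mass of $v$ for small $\epsilon$. Your upper bounds survive (one may always enlarge the domain of integration to $\R^N$), but the lower bounds you need in (ii) and (iii) collapse. Nor can you repair this by recentering the bubble as $v(\delta_{1/\epsilon}(z-z_0))$, because the weight $|x|^{2\lambda}$ is not translation invariant, so the scaling identity $\int|D_\lambda v_\epsilon|^2=\mathcal{S}_\lambda^{Q/2}$ on which you rely would fail. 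The only consistent reading is that the concentration point and the cutoff are both centered at a point of the degeneracy set (the origin); this is what the paper's own proof of (iv) silently does when it writes $B_r$ and $B_{r/\epsilon}$ with no center $z_0$, notwithstanding condition $(g_2)$, which places $B_{2R}(z_0)$ away from $\Sigma$. You inherited this tension from the paper's setup, but since your two-sided estimates genuinely require the dilated region to contain the bubble's core, your write-up must make the centering explicit; as stated, the claimed transfer of regions is false whenever $z_0\neq 0$.
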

\begin{proof}
     We refer to \cite[ Proposition 4.2 ]{PL} for the first three conclusions. Since $\frac{2^*_\lambda}{2}<\gamma<2^*_\lambda$, we have $0<Q-\gamma\left(\frac{Q-2}{2}\right )$ and $v\in L^\gamma(\R^N).$ Now, using the definition of $u_\epsilon$, we have
\begin{align*}
\int_\Omega|u_\epsilon|^\gamma\, dz=\int_\Omega|\phi v_\epsilon|^\gamma\, dz&=\int_{B_r}|v_\epsilon|^\gamma\, dz+\int_{B_{2r}\setminus{B_r}}|\phi v_\epsilon|^\gamma\, dz\nonumber\\
&\leq \epsilon^\frac{\gamma(2-Q)}{2}\left (\int_{B_r}|v(\delta_\frac{1}{\epsilon} (z))|^\gamma\,  dz+\|\phi\|_\infty\int_{B_{2r}\setminus{B_r}} |v(\delta_\frac{1}{\epsilon}(z))|^\gamma\, dz\right )\nonumber\\
&\leq \epsilon^\frac{\gamma(2-Q)}{2}\left (\int_{B_\frac{r}{\epsilon}}|v(\xi)|^\gamma \epsilon^Q d\xi+ \|\phi\|_\infty\int_{B_\frac{2r}{\epsilon}\setminus{B_\frac{r}{\epsilon}}} |v(\xi)|^\gamma\epsilon^Q d\xi\right )\nonumber\\
&\leq C\epsilon^{Q-\frac{\gamma(Q-2)}{2}}\int_{\R^N}|v(\xi)|^\gamma\, d\xi\leq M\epsilon^{Q-\frac{\gamma(Q-2)}{2}}.
\end{align*}
Thus, the proof is complete.
\end{proof}
Now we give the notion of a weak solution to \textcolor{black}{problem \eqref{ME}}.
\begin{defn}\label{def1}
    A function $u\in H^{1,\lambda}_0(\Omega)$ is said to be a 'weak solution' of  problem (\ref{ME}) if for every $\phi\in C_c^\infty(\Omega)$ the following identity holds: 
    $$\int_\Omega \nabla_\lambda u\cdot \nabla_\lambda \phi=\mu\int_\Omega g|u|^{r-1}u\phi dz + \int_\Omega h|u|^{s-1}u\phi dz.$$
\end{defn}
The  functional associated to the given problem (\ref{ME}) is denoted by $I_\mu: H^{1,\lambda}_0(\Omega)\to \R$ and defined as 
 $$I_\mu(u)=\frac{1}{2}\int_\Omega |\nabla_\lambda u|^2 dz-\frac{\mu}{r+1}\int_\Omega g|u|^{r+1} dz-\frac{1}{s+1}\int_\Omega h|u|^{s+1} dz,$$
which is a $C^1$-functional. In order to find solutions to the problem (\ref{ME}), we study the existence of critical points of $I_\mu$ via the Nehari manifold technique. The Nehari manifold corresponding to the functional $I_\mu$ is denoted by $\mathcal{M}_\mu$ and defined as
\begin{align}\label{N1}
    \mathscr{M}_\mu:=\{ u\in H^{1,\lambda}_0(\Omega)\setminus\textcolor{black}{\{0\}}: \langle I'_\mu(u),u\rangle=0 \}.
\end{align}
 For simplicity, let us define another functional $\tau: H^{1,\lambda}_0(\Omega)\to \R$ by
\begin{align}\label{N2}
    \tau(u)=\langle I'_\mu(u),u \rangle=\int_\Omega |\nabla_\lambda u|^2 dz-\mu\int_\Omega g|u|^{r+1} dz-\int_\Omega h|u|^{s+1} dz.
\end{align}
Therefore, $$\langle \tau'(u),u\rangle=2\int_\Omega |\nabla_\lambda u|^2 dz-\mu(r+1)\int_\Omega g|u|^{r+1} dz-(s+1)\int_\Omega h|u|^{s+1} dz.$$
We split the Nehari Manifold into three parts, $\mathscr{M}_\mu^+$, $\mathscr{M}_\mu^-$ and $\mathscr{M}_\mu^0$ such that 
$$\mathscr{M}_\mu=\mathscr{M}_\mu^+\cup\mathscr{M}_\mu^0\cup\mathscr{M}_\mu^-,$$
 where 
 \begin{align*}
     \mathscr{M}^+_\mu=\{ u\in\mathscr{M}: &\langle \tau'(u),u\rangle>0\},\, \mathscr{M}^-_\mu=\{ u\in\mathscr{M}: \langle \tau'(u),u\rangle<0\},\\
     &\mathscr{M}^0_\mu=\{ u\in\mathscr{M}: \langle \tau'(u),u\rangle=0\}.
 \end{align*}
We also define $$m_\mu:=\inf_{u\in\mathscr{M}_\mu} I_\mu(u),\ m^-_\mu:=\inf_{u\in\mathscr{M}^-_\mu}  I_\mu(u)\text{ and } m^+_\mu:=\inf_{u\in\mathscr{M}^+_\mu} I_\mu(u).$$
 
\section{Auxiliary results}\label{AUX}
This section discusses several auxiliary results that play a crucial role in proving our multiplicity result for the subcritical case. Firstly, we show that $\mathscr{M}^+_\mu$ and $\mathscr{M}^-_\mu$ are nonempty. To this aim, we analyze the fibering maps. For $u\in H^{1,\lambda}_0(\Omega)$, we consider the fibering map $F:[0,\infty)\to\R$,
$$F(t)=I_\mu(tu)=\frac{t^2}{2}\int_\Omega |\nabla_\lambda u|^2\, dz-\frac{\mu t^{r+1}}{r+1}\int_\Omega g|u|^{r+1}\, dz-\frac{t^{s+1}}{s+1}\int_\Omega h|u|^{s+1} dz.$$ Therefore,
\begin{align}\label{NM0}
   F'(t)=\frac{1}{t}\langle I'_\mu(tu),tu\rangle&=\frac{1}{t}\tau(tu)=t\int_\Omega |\nabla_\lambda u|^2\ dz-\mu t^{r}\int_\Omega g|u|^{1+r}\ dz-t^{s}\int_\Omega h|u|^{1+s}\ dz\nonumber\\
    &=t^{r}\left [\left ( t^{1-r}\int_\Omega |\nabla_\lambda u|^2\ dz-t^{s-r}\int_\Omega h|u|^{1+s}\ dz\right )-\mu\int_\Omega g|u|^{1+r} \ dz\right ].
\end{align}
\textcolor{black}{Now we define a function $G:[0,\infty)\to\R$ by
$$G(t)=t^{1-r}\int_\Omega |\nabla_\lambda u|^2\ dz-t^{s-r}\int_\Omega h|u|^{1+s}\ dz.$$
Therefore, we have}
\begin{align}\label{NM1}
    F''(t)=\frac{1}{t^2}\langle \tau'(tu),tu\rangle-\frac{1}{t^2}\tau(tu)=t^rG'(t)+rt^{r-1}\left (G(t)-\mu\int_\Omega g|u|^{1+r} \ dz\right ).
\end{align}
The function $G$ enjoys the following property.
\begin{lem}\label{LM1}
    Let $u\in H^{1,\lambda}_0(\Omega)$ \textcolor{black}{be} such that $\int_\Omega h|u|^{s+1}\;dz>0$. Then there exists $t_0>0$ such that $G$ is increasing on $[0,t_0]$ and decreasing on $[t_0,\infty)$\textcolor{black}{.}
\end{lem}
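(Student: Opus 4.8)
The plan is to treat this as an elementary one-variable calculus problem, since $G$ is a difference of two power functions. First I would abbreviate the two integrals appearing in $G$ by setting
$$A:=\int_\Omega |D_\lambda u|^2\, dz \quad\text{and}\quad B:=\int_\Omega h|u|^{s+1}\, dz,$$
so that $G(t)=A\,t^{1-r}-B\,t^{s-r}$ on $(0,\infty)$. The hypothesis gives $B>0$; moreover $B>0$ forces $u\not\equiv 0$, and since $\|\cdot\|_\lambda$ is a genuine norm on $H^{1,\lambda}_0(\Omega)$ this yields $A>0$ as well. Both constants are therefore strictly positive, and this is the only place the hypothesis is used.

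Next I would differentiate and factor out the positive quantity $t^{-r}$:
$$G'(t)=A(1-r)t^{-r}-B(s-r)t^{s-r-1}=t^{-r}\bigl[A(1-r)-B(s-r)t^{s-1}\bigr].$$
Because $0\leq r<1<s$, both coefficients $1-r$ and $s-r$ are strictly positive, so the bracketed expression $\psi(t):=A(1-r)-B(s-r)t^{s-1}$ is a strictly decreasing function of $t$ on $(0,\infty)$ (the exponent $s-1$ is positive). It satisfies $\psi(0^+)=A(1-r)>0$ and $\psi(t)\to-\infty$ as $t\to\infty$.

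Consequently $\psi$ has a unique zero $t_0=\bigl(A(1-r)/(B(s-r))\bigr)^{1/(s-1)}>0$, with $\psi>0$ on $(0,t_0)$ and $\psi<0$ on $(t_0,\infty)$. Since $t^{-r}>0$ throughout $(0,\infty)$, the sign of $G'$ coincides with that of $\psi$: thus $G'>0$ on $(0,t_0)$ and $G'<0$ on $(t_0,\infty)$, and continuity of $G$ up to $t=0$ extends these strict interior statements to the closed intervals, giving the claimed monotonicity of $G$ on $[0,t_0]$ and on $[t_0,\infty)$.

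There is no real obstacle here — the argument is routine differential calculus. The only points requiring a moment's care are confirming that the sign hypothesis $\int_\Omega h|u|^{s+1}\, dz>0$ is exactly what makes $B$ (and hence, via $u\not\equiv 0$, also $A$) strictly positive, and checking that the exponent arithmetic respects the constraints $0\leq r<1<s$ so that $1-r$, $s-r$, and $s-1$ are all positive. Without $B>0$ the function $G$ would be monotone increasing with no turning point, so the hypothesis is genuinely needed.
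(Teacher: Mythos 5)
Your proof is correct and follows essentially the same route as the paper's: compute $G'$, identify the unique critical point $t_0=\left(\frac{(1-r)A}{(s-r)B}\right)^{\frac{1}{s-1}}$, and read off the sign of $G'$ on either side of it. Your added check that the hypothesis forces $B>0$ and hence (via $u\not\equiv 0$ and the norm property of $\|\cdot\|_\lambda$) $A>0$ is a small refinement the paper leaves implicit, but it is not a different argument.
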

\begin{proof}
We have
\begin{align*}
       G'(t)=(1-r)t^{-r}\int_\Omega |\nabla_\lambda u|^2 dz-(s-r)t^{s-r-1}\int_\Omega h|u|^{s+1}\ dz\ \text{ for all } t>0.
   \end{align*}
It is easy to observe that $t_0=\left (\frac{(1-r)\int_\Omega |\nabla_\lambda u|^2\ dz}{(s-r)\int_\Omega h|u|^{s+1}\ dz}\right )^\frac{1}{(s-1)}$ is the only critical point of $G$ and $G'(t)>0$ in $(0,t_0)$ and $G'(t)<0$ in $[t_0,\infty)$. Furthermore, 
\begin{align}\label{SV}
    \sup_{t\geq 0}\, G(t)=G(t_0)=\left(\frac{1-r}{s-r}\right)^\frac{1-r}{s-1}\left(\frac{s-1}{s-r}\right )\frac{\|u\|_\lambda^{2(s-r)}}{\left(\int_\Omega h|u|^{1+s}\;dz\right)^\frac{1-r}{s-1}}.
\end{align}
This proves the Lemma.
\end{proof}
So far, we do not know \textcolor{black}{whether} the Nehari manifold $\mathscr{M}_\mu$ is nonempty. The following lemma ensures that $\mathscr{M}_\mu\neq\emptyset$.
\begin{lem}\label{Char}
   We assume that ($g_1$) and $(h_1)$ hold. Then there exists $\mu_0>0$ such that for every $\mu\in(0,\mu_0)$ and $u\in H^{1,\lambda}_0(\Omega)$ with $\int_\Omega h|u|^{s+1}\, dz>0$, we have
   \begin{enumerate}
       \item [(a)] if $\int_\Omega g|u|^{1+r}\;dz\leq 0$, then there exists unique $t^-=t^-(u)>t_0$ such that $t^-u\in \mathcal{M}^-_\mu$ and $$I_\mu(t^-u)=\sup_{t\geq 0} I_\mu(tu)>0,$$
       \item [(b)] if $\int_\Omega g|u|^{1+r}\;dz>0$, then there exist unique $t^+=t^+(u),\;t^-=t^-(u)>0$ with $t^+<t_0<t^-$ such that $t^+u\in\mathscr{M}^+_\mu,\;\text{ and } t^-u\in\mathscr{M}^-_\mu$. Moreover,
       $$I_\mu(t^+u)=\inf_{0\leq t\leq t^-}I_\mu(tu),\; \text{ and }I_\mu(t^-u)=\sup_{t\geq t_0}I_\mu(tu).$$
   \end{enumerate}
\end{lem}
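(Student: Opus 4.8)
The plan is to recast the entire statement in terms of the fibering map $F$ and to reduce everything to the shape of $G$ recorded in Lemma \ref{LM1}. The key observation is that, by \eqref{NM0}, for $t>0$ the equation $F'(t)=0$ is equivalent to
$$G(t)=\mu\int_\Omega g|u|^{1+r}\,dz,$$
so the critical points of $F$ on $(0,\infty)$ are exactly the abscissae where the horizontal line $y=\mu\int_\Omega g|u|^{1+r}\,dz$ meets the graph of $G$. Furthermore, evaluating \eqref{NM1} at such a point the term carrying $G(t)-\mu\int_\Omega g|u|^{1+r}\,dz$ drops out, leaving $F''(t)=t^rG'(t)$; since $t^2F''(t)=\langle\tau'(tu),tu\rangle$ whenever $tu\in\mathscr{M}_\mu$, the sign of $\langle\tau'(tu),tu\rangle$ coincides with that of $G'(t)$. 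Combining this with Lemma \ref{LM1}, a Nehari point $tu$ belongs to $\mathscr{M}^+_\mu$ exactly when $t<t_0$ and to $\mathscr{M}^-_\mu$ exactly when $t>t_0$. Recording also that $G(0)=0$, that $G$ strictly increases on $(0,t_0)$ to its maximum $G(t_0)>0$, and that $G$ strictly decreases on $(t_0,\infty)$ to $-\infty$, the lemma reduces to a case analysis on the sign and size of $\mu\int_\Omega g|u|^{1+r}\,dz$.

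For part (a), where $\int_\Omega g|u|^{1+r}\,dz\le 0$, the line $y=\mu\int_\Omega g|u|^{1+r}\,dz$ sits at or below $0$; since $G>0$ on $(0,t_0]$ and $G$ decreases continuously from $G(t_0)>0$ to $-\infty$ on $(t_0,\infty)$, it meets the graph of $G$ in exactly one point $t^->0$, which necessarily lies in $(t_0,\infty)$. Hence $t^-u\in\mathscr{M}^-_\mu$. Tracking the sign of $F'(t)=t^r\bigl(G(t)-\mu\int_\Omega g|u|^{1+r}\,dz\bigr)$ shows $F'>0$ on $(0,t^-)$ and $F'<0$ on $(t^-,\infty)$, so $t^-$ is the global maximizer of $F$ on $[0,\infty)$, and from $F(0)=I_\mu(0)=0$ we conclude $I_\mu(t^-u)=\sup_{t\ge0}I_\mu(tu)>0$. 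Note that this part needs no restriction on $\mu$.

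For part (b), where $\int_\Omega g|u|^{1+r}\,dz>0$, the line lies strictly above $0$ and meets the graph of $G$ in exactly two points $t^+\in(0,t_0)$ and $t^-\in(t_0,\infty)$ as soon as $\mu\int_\Omega g|u|^{1+r}\,dz<G(t_0)=\sup_{t\ge0}G(t)$; by the sign discussion these give $t^+u\in\mathscr{M}^+_\mu$ and $t^-u\in\mathscr{M}^-_\mu$. The resulting monotonicity of $F$ — decreasing on $(0,t^+)$, increasing on $(t^+,t^-)$, decreasing on $(t^-,\infty)$ — identifies $t^+$ as a local minimizer with $F(t^+)<F(0)=0\le F(t^-)$, whence $I_\mu(t^+u)=\inf_{0\le t\le t^-}I_\mu(tu)$, and identifies $t^-$ as the maximizer of $F$ on $[t_0,\infty)$, whence $I_\mu(t^-u)=\sup_{t\ge t_0}I_\mu(tu)$.

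The one genuinely quantitative step — and the main obstacle — is producing a threshold $\mu_0>0$ that is uniform in $u$, i.e. guaranteeing $\mu\int_\Omega g|u|^{1+r}\,dz<G(t_0)$ for all admissible $u$ whenever $\mu<\mu_0$. Here the choice of exponents in $(g_1)$ and $(h_1)$ is decisive. By Hölder's inequality with the conjugate pair $\bigl(a,\tfrac{q}{r+1}\bigr)$ together with the continuous embedding $H^{1,\lambda}_0(\Omega)\hookrightarrow L^q(\Omega)$ of Lemma \ref{Emb} one bounds $\int_\Omega g|u|^{1+r}\,dz\le\|g\|_a S_q^{r+1}\|u\|_\lambda^{r+1}$, and likewise $(h_1)$ with $H^{1,\lambda}_0(\Omega)\hookrightarrow L^p(\Omega)$ gives $\int_\Omega h|u|^{1+s}\,dz\le\|h\|_b S_p^{s+1}\|u\|_\lambda^{s+1}$. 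Substituting the latter into the explicit formula \eqref{SV} for $G(t_0)$ and simplifying, the powers of $\|u\|_\lambda$ collapse: the numerator contributes the exponent $\tfrac{2(s-r)}{s-1}$ while the denominator contributes $\tfrac{(s+1)(1-r)}{s-1}$, and their difference equals $r+1$, so that both $G(t_0)$ and $\int_\Omega g|u|^{1+r}\,dz$ scale like $\|u\|_\lambda^{r+1}$. Consequently the ratio $G(t_0)\big/\int_\Omega g|u|^{1+r}\,dz$ is bounded below by a constant depending only on $r,s,\|g\|_a,\|h\|_b,S_p,S_q$, and taking $\mu_0$ to be that constant makes the required inequality hold simultaneously for every $u$. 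This exact cancellation of the $\|u\|_\lambda$ powers is the delicate point, and it is precisely what renders $\mu_0$ independent of $u$.
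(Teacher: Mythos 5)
Your proof is correct and takes essentially the same route as the paper: critical points of the fibering map are exactly the intersections of the horizontal line $y=\mu\int_\Omega g|u|^{1+r}\,dz$ with the graph of $G$, the sign of $G'$ at such a point decides membership in $\mathscr{M}^+_\mu$ versus $\mathscr{M}^-_\mu$, and the uniform threshold $\mu_0$ comes from the same H\"{o}lder-plus-embedding estimates as in \eqref{S1} and \eqref{S2}, with the powers of $\|u\|_\lambda$ collapsing to $1+r$ on both sides. The only blemish is the parenthetical claim $F(t^+)<F(0)=0\le F(t^-)$ in part (b): the inequality $0\le F(t^-)$ is neither proved nor needed (the inf/sup characterizations follow from the decreasing--increasing--decreasing pattern of $F$ alone), and it can actually fail when $\mu\int_\Omega g|u|^{1+r}\,dz$ is close to $G(t_0)$, so you should simply delete it.
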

\begin{proof}
\begin{enumerate}
   \item[($a$)] Since $\int_\Omega g|u|^{1+r}\;dz\leq 0,$ from Lemma \ref{LM1}, there exists unique $t^->t_0$ such that $$G(t^-)=\textcolor{black}{\mu}\int_\Omega g|u|^{1+r}\;dz \text{ and }G'(t^-)<0.$$
Thus, from (\ref{NM0}) and (\ref{NM1}) we have
$F'(t^-)=0$ and $F''(t^-)<0$. Consequently, $t^-u\in\mathscr{M}^-_\mu$ and $I_\mu(t^-u)=\sup_{t\geq 0}\, I_\mu(tu).$ Now, for every $t>0$, we have 
\begin{align}\label{NM2}
    I_\mu(t^-u)\geq I_\mu(tu)&=\frac{t^2}{2}\int_\Omega |\nabla_\lambda u|^2\, dz-\frac{\mu t^{r+1}}{r+1}\int_\Omega g|u|^{r+1}\, dz-\frac{t^{s+1}}{s+1}\int_\Omega h|u|^{s+1} dz\nonumber\\
    &\geq \frac{t^2}{2}\int_\Omega |\nabla_\lambda u|^2\, dz-\frac{t^{s+1}}{s+1}\int_\Omega h|u|^{s+1} dz.
\end{align}
The function, in the right of the above inequality attained its maximum value at the point $t_1=\left (\frac{\|u\|_\lambda^2}{\int_\Omega h|u|^{s+1}\;dz}\right )^\frac{1}{s-1}$ and the maximum value is $\frac{(s-1)}{2(s+1)}\left(\frac{\|u\|_\lambda^{s+1}}{\int_\Omega h|u|^{1+s}\;dz}\right)^\frac{2}{(s-1)}.$ Hence, from the inequality (\ref{NM2}), we obtain
$$I_\mu(t^-u)\geq \sup_{t\geq 0}\; I_\mu(tu)>0.$$
    \item [($b$)] By using H\"{o}lder inequality, Lemma \ref{Emb} and the identity (\ref{SV}), we obtain
\begin{align}\label{S1}
    0<\mu\int_\Omega g|u|^{1+r}\ dz\leq\mu S_q^{1+r}\|g\|_a\|u\|^{1+r}_\lambda,
\end{align}
and
\begin{align}\label{S2}
 G(t_0)\geq \|u\|_\lambda^{1+r}\left(\frac{s-1}{s-r}\right) \left (\frac{1-r}{s-r}\right )^\frac{1-r}{s-1}\left( S_p^{1+s}\|h\|_b\right )^\frac{-(1-r)}{(s-1)}.
\end{align}
Observing (\ref{S1}) and (\ref{S2}), we have
\begin{align}
 0<\mu\int_\Omega g|u|^{1+r}\ dz&\leq\mu S_q^{1+r}\|g\|_a\|u\|^{1+r}_\lambda\nonumber\\
 &< \|u\|_\lambda^{1+r}\left(\frac{s-1}{s-r}\right) \left (\frac{1-r}{s-r}\right )^\frac{1-r}{s-1}\left( S_p^{1+s}\|h\|_b\right )^\frac{-(1-r)}{(s-1)}\leq G(t_0),
\end{align}
for every $\mu\in(0\; \mu_0)$, where
\begin{align}
    \mu_0=\left(\frac{s-1}{s-r}\right) \left (\frac{1-r}{s-r}\right )^\frac{1-r}{s-1}\frac{\left( S_p^{1+s}\|h\|_b\right )^\frac{-(1-r)}{(s-1)}}{S_q^{1+r}\|g\|_a}.
\end{align}
Thus for every $\mu\in(0,\mu_0)$, we get
\begin{align*}
    G(0)<\mu\int_\Omega g|u|^{1+r}<G(t_0).
\end{align*}
By Lemma \ref{LM1}, there exist unique $t^+,\;t^->0$ with $t^+<t_0<t^-$ such that $G(t^+)=G(t^-)=\mu\int_\Omega g|u|^{1+r} \ dz$ and $G'(t^+)>0,\;G'(t^-)<0$. Hence, (\ref{NM0}) and (\ref{NM1}) suggest that $t^+u\in\mathscr{M}_\mu^+$ and $t^-u\in\mathscr{M}_\mu^-$. Furthermore, since $G$ is increasing in $(0,\;t_0)$ and decreasing in $(t_0,\;\infty)$, so $F(t)$ is decreasing in $(0,\;t^+),\; (t^-,\infty)$ and increasing in $(t^+,\;t^-)$. Thus, $I_\mu(t^+u)=\inf_{0\leq t\leq t^-}I_\mu(tu),\; \text{ and } I_\mu(t^-u)=\sup_{t\geq t_0}I_\mu(tu).$

\end{enumerate}
\end{proof}

\begin{rmk}\label{IMR}
    \begin{enumerate}
        \item[(i)] Due to the condition ($g_1$) and ($h_1$), there exists $u\in H^{1,\lambda}_0(\Omega)$ such that $\int_\Omega h|u|^{s+1}\; dz>0$ and $\int_\Omega g|u|^{1+r}\;dz>0$. Thus, the above Lemma guarantees that $\mathscr{M}^+_\mu$ and $\mathscr{M}^-_\mu$ are nonempty for every $\mu\in(0,\;\mu_0).$
        \item[(ii)] In particular, if $u\in\mathcal{M}^-_\mu\;\mbox{and}\;\int_\Omega h|u|^{s+1}\; dz>0$, then $t^-=1$ and $$I_\mu(t^+u)<I_\mu(u).$$
    \end{enumerate}
\end{rmk}

The following Lemma plays a crucial role in the proof of our argument.

\begin{lem}\label{CL}
      Suppose that $(g_1)$ and $(h_1)$ are satisfied. Then there exists $\Tilde{\mu}>0$ such that for every $\mu\in(0,\Tilde{\mu})$, the following conclusions hold.
      \begin{enumerate}
          \item[(i)]  We have $\mathscr{M}^0_\mu=\emptyset$.
          \item[(ii)] $I_\mu|_{\mathscr{M}_\mu}$ is coercive and bounded below.
          \item[(iii)] $m_\mu\leq m^+_\mu<0.$
      \end{enumerate}
  \end{lem}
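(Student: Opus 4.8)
The plan is to read off, on each stratum of $\mathscr{M}_\mu$, two scalar identities and to pit them against the Hölder--Sobolev estimates coming from $(g_1)$, $(h_1)$ via Lemma \ref{Emb}. For $u\in\mathscr{M}_\mu$ the constraint $\tau(u)=0$ says $\|u\|_\lambda^2=\mu\int_\Omega g|u|^{r+1}\,dz+\int_\Omega h|u|^{s+1}\,dz$, and eliminating one integral at a time turns $\langle\tau'(u),u\rangle$ into the two equivalent forms
\[
\langle\tau'(u),u\rangle=(s-r)\mu\int_\Omega g|u|^{r+1}\,dz-(s-1)\|u\|_\lambda^2=(1-r)\|u\|_\lambda^2-(s-r)\int_\Omega h|u|^{s+1}\,dz .
\]
These two expressions drive all three parts.

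For (i) I would argue by contradiction. If $u\in\mathscr{M}^0_\mu$ then $\langle\tau'(u),u\rangle=0$, and the two forms force $\mu\int_\Omega g|u|^{r+1}\,dz=\frac{s-1}{s-r}\|u\|_\lambda^2$ and $\int_\Omega h|u|^{s+1}\,dz=\frac{1-r}{s-r}\|u\|_\lambda^2$, both strictly positive since $u\neq0$ and $r<1<s$. Estimating the first from above by Hölder and Lemma \ref{Emb}, $\mu\int_\Omega g|u|^{r+1}\,dz\le\mu\|g\|_aS_q^{r+1}\|u\|_\lambda^{r+1}$, produces an upper bound $\|u\|_\lambda\le(C_1\mu)^{1/(1-r)}\to0$ as $\mu\to0$; treating the second the same way with $(h_1)$ produces a lower bound $\|u\|_\lambda\ge C_2>0$ independent of $\mu$. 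Choosing $\tilde\mu$ so small that $(C_1\tilde\mu)^{1/(1-r)}<C_2$ makes the bounds incompatible, so $\mathscr{M}^0_\mu=\emptyset$.

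For (ii) I would eliminate the supercritical term through the Nehari constraint and write, for $u\in\mathscr{M}_\mu$,
\[
I_\mu(u)=\frac{s-1}{2(s+1)}\|u\|_\lambda^2-\mu\frac{s-r}{(r+1)(s+1)}\int_\Omega g|u|^{r+1}\,dz\ge\frac{s-1}{2(s+1)}\|u\|_\lambda^2-C_3\mu\|u\|_\lambda^{r+1},
\]
the last inequality again by Hölder and Lemma \ref{Emb} (valid irrespective of the sign of the $g$-integral, since its coefficient is negative). Because $r+1<2$, the right-hand side has the shape $\alpha t^2-\beta t^{r+1}$ with $\alpha>0$ as a function of $t=\|u\|_\lambda$; it is bounded below and blows up as $t\to\infty$, which is exactly coercivity and boundedness below of $I_\mu$ on $\mathscr{M}_\mu$.

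For (iii), $m_\mu\le m^+_\mu$ is immediate from $\mathscr{M}^+_\mu\subseteq\mathscr{M}_\mu$. For $m^+_\mu<0$, observe that $u\in\mathscr{M}^+_\mu$ means $\langle\tau'(u),u\rangle>0$, so the first form gives $\mu\int_\Omega g|u|^{r+1}\,dz>\frac{s-1}{s-r}\|u\|_\lambda^2$; inserting this into the reduced expression for $I_\mu$ from (ii) yields $I_\mu(u)<-\frac{(s-1)(1-r)}{2(s+1)(r+1)}\|u\|_\lambda^2<0$ for every $u\in\mathscr{M}^+_\mu$. Since Remark \ref{IMR}(i) guarantees $\mathscr{M}^+_\mu\neq\emptyset$ once $\mu<\mu_0$, evaluating $I_\mu$ at a single such element already gives $m^+_\mu<0$. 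Finally I would set $\tilde\mu=\min\{\mu_0,\mu_{(i)}\}$, where $\mu_{(i)}$ is the threshold produced in (i), so that all three conclusions hold simultaneously. The one genuinely delicate point is (i): one must track carefully the $\mu$-dependence of the upper bound against the $\mu$-independence of the lower bound, and it is precisely the interplay of the concave exponent $r+1<2$ with the convex exponent $s+1>2$ that separates these two scales and forces the threshold.
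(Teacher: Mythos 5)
Your proposal is correct and follows essentially the same route as the paper's proof: the same two equivalent forms of $\langle\tau'(u),u\rangle$, the same H\"{o}lder--Sobolev estimates producing incompatible upper and lower bounds on $\|u\|_\lambda$ for (i) (the paper phrases this with a sequence $\mu_n\to 0$, you with explicit thresholds, which is immaterial), the same reduced expression and sublinear-versus-quadratic comparison for (ii), and the same strictly negative bound for (iii), where you substitute the $g$-form of the constraint while the paper uses the $h$-form, arriving at the identical constant $\frac{(1-r)(s-1)}{2(1+r)(s+1)}$. Your explicit appeal to Remark \ref{IMR}(i) for nonemptiness of $\mathscr{M}^+_\mu$ and the choice $\Tilde{\mu}=\min\{\mu_0,\mu_{(i)}\}$ is a minor point of care that the paper handles instead at the start of its proof of Theorem \ref{MT1} by setting $\mu_1=\min\{\mu_0,\Tilde{\mu}\}$.
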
  
\begin{proof}
    (i) We prove it by contradiction. If possible, let there exist a sequence $\{u_n\}$ such that $u_n\in \mathscr{M}^0_{\mu_n}$, where $\mu_n\to 0$ as $n\to\infty$. Hence, from the definition of $\mathscr{M}^0_{\mu_n}$, we have 
    \begin{align}
        &\int_\Omega |\nabla_\lambda u_n|^2 dz-\mu\int_\Omega g|u_n|^{r+1} dz-\int_\Omega h|u_n|^{s+1} dz=0\label{T31}\\
        and&\nonumber\\
        &2\int_\Omega |\nabla_\lambda u_n|^2 dz-\mu(r+1)\int_\Omega g|u_n|^{r+1} dz-(s+1)\int_\Omega h|u_n|^{s+1} dz=0.\label{T32}
    \end{align}
Eliminating $\int_\Omega g|u_n|^{r+1} dz$, we obtain
\begin{align*}
    (1-r)\int_\Omega |\nabla_\lambda u_n|^2 dz-(s-r)\int_\Omega h|u_n|^{s+1}\ dz=0.
\end{align*}
 Using \textcolor{black}{H\"{o}lder} inequality, Lemma \ref{Emb} and condition ($h_1$), we deduce
\begin{align*}
    (1-r)\|u_n\|_\lambda^2=(s-r)\int_\Omega h|u_n|^{s+1}\ dz&\leq (s-r)\|h\|_b\|u_n\|_{(s+1)b'}^{s+1}\leq (s-r)\|h\|_b\|u_n\|_{p}^{s+1}\\
    &\leq S_p^{s+1}(s-r)\|h\|_{b}\|u_n\|_\lambda^{s+1},
\end{align*}
which implies
 \begin{align}\label{T41}
     \|u_n\|_\lambda\geq \left (\frac{(1-r)}{S_p^{s+1}(s-r)\|h\|_{L^b}}\right )^\frac{1}{s-1}.
 \end{align}
Similarly, by eliminating $\int_\Omega h|u_n|^{s+1} dz$ from (\ref{T31}) and (\ref{T32}), and using \textcolor{black}{Lemma \ref{Emb}} together with ($g_1$), we deduce
\begin{align}\label{T5}
    \|u_n\|_\lambda\leq \left (\frac{S_q^{1+r}\mu_n(s-r)\|g\|_{L^a}}{(s-1)}\right )^\frac{1}{1-r}.
\end{align}
Hence, (\ref{T5}) ensures $\|u_n\|_\lambda\to 0$ as $n\to\infty$, which contradicts (\ref{T41}). Consequently, there exists $\Tilde{\mu}>0$ such that for every $\mu\in(0,\Tilde{\mu})$, $$\mathscr{M}^0_\mu=\emptyset.$$\\
(ii) Let $u\in\mathscr{M}_\mu$. Thus, using (\ref{T31}), we obtain 
\begin{align}
     I_\mu(u)&=\frac{1}{2}\int_\Omega |\nabla_\lambda u|^2 dz-\frac{\mu}{r+1}\int_\Omega g|u|^{r+1} dz-\frac{1}{s+1}\int_\Omega h|u|^{s+1} dz\nonumber\\
     &=\left (\frac{1}{2}-\frac{1}{s+1}\right )\|u\|^2_\lambda-\mu\left (\frac{1}{r+1}-\frac{1}{s+1}\right )\int_\Omega g|u|^{r+1}\ dz\label{RR}\\
     &\geq \left (\frac{1}{2}-\frac{1}{s+1}\right )\|u\|^2_\lambda-\mu\frac{(s-r)}{(r+1)(s+1)}S_q^{1+r}\|g\|_a\|u\|_q^{r+1},\nonumber\\
     &\geq \left (\frac{1}{2}-\frac{1}{s+1}\right )\|u\|^2_\lambda-\mu\frac{(s-r)}{(r+1)(s+1)}S_q^{1+r}\|g\|_a\|u\|_\lambda^{r+1}.\label{T8}
\end{align}
The second last inequality is obtained by using H\"{o}lder inequality, ($g_1$) and \textcolor{black}{Lemma \ref{Emb}}. This proves conclusion (ii).\\
(iii) For $u\in\mathscr{M}_\mu$, \textcolor{black}{we have $\tau(u)=0$ (see \eqref{N2}) and hence}
\begin{align}
    \langle \tau'(u),u\rangle&=2\int_\Omega |\nabla_\lambda u|^2 dz-\mu(r+1)\int_\Omega g|u|^{r+1}\ dz-(s+1)\int_\Omega h|u|^{s+1} dzr\nonumber\\
    &=(1-s)\|u\|_\lambda^2+\mu(s-r)\int_\Omega g|u|^{r+1}\ dz\label{TT9}\\
    &=(1-r)\|u\|_\lambda^2-(s-r)\int_\Omega h|u|^{s+1}\  dz.\label{TT6}
\end{align}
Thus if $u\in\mathscr{M}^+_\mu$ then we have 
\begin{align}\label{T9}
    (s-1)\|u\|_\lambda^2<\mu(s-r)\int_\Omega g|u|^{1+r}\, dz
\end{align}
and
\begin{align}\label{T6}
    (s-r)\int_\Omega h|u|^{1+s}\, dz<(1-r)\|u\|_\lambda^2.
\end{align}
Therefore, utilizing (\ref{T6}), we deduce
\begin{align}\label{SAN}
    I_\mu(u)&=\frac{1}{2}\int_\Omega |\nabla_\lambda u|^2 dz-\frac{\mu}{r+1}\int_\Omega g|u|^{r+1} dz-\frac{1}{s+1}\int_\Omega h|u|^{s+1} dz\nonumber\\
    &=\left (\frac{1}{2}-\frac{1}{r+1}\right )\|u\|^2_\lambda+\left (\frac{1}{r+1}-\frac{1}{s+1}\right )\int_\Omega h|u|^{s+1}\ dz\\
    &<\left (\frac{(r-1)}{2(1+r)}+\frac{(s-r)}{(1+r)(s+1)}\frac{(1-r)}{(s-r)}\right )\|u\|_\lambda^2\nonumber\\
    &=\frac{(1-r)(1-s)}{2(1+r)(s+1)}\|u\|_\lambda^2<0,
\end{align}
which leads to conclude that $m_\mu \leq m^+_\mu<0$.\\
\end{proof}

\begin{rmk}
From (\ref{TT9}) and (\ref{TT6}), one can immediately observe that if $u\in\mathscr{M}_\mu^+$ and $v\in\mathscr{M}_\mu^-$, then $\int_\Omega g|u|^{1+r}\ dz>0$ and $\int_\Omega h|v|^{s+1}\ dz>0$, respectively.
\end{rmk}
The next Lemma helps us to obtain a Palais-Smale sequence in $\mathscr{M}^{+(\text{ or }-)}_\mu$. Motivated by \cite{GT92, WU1, WU}, we prove the following Lemma.
\begin{lem}\label{L}
   For each $u\in\mathscr{M}_\mu^{+(\text{ or }-)}$, there exist $\epsilon>0$ and a differentiable function $F^{+(\text{ or }-)}: B(0,\epsilon)\to \R$ such that $F^{+(\text{ or }-)}(0)=1,\ F^{+(\text{ or }-)}(v)(u-v)\in\mathscr{M}_\mu^{+(\text{ or }-)}$ and $$\langle (F^{+(\text{ or }-)})'(0),v\rangle=\frac{2\int_\Omega \nabla_\lambda u\cdot \nabla_\lambda v\ dz-(1+r)\int_\Omega g|u|^{r-1}uv\ dz-(1+s)\int_\Omega h|u|^{s-1}uv\ dz}{(1-r)\int_\Omega |\nabla_\lambda u|^2\ dz-(s-r)\int_\Omega h|u|^{1+s}\ dz}.$$
\end{lem}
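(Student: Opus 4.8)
The plan is to realize the Nehari manifold locally as the zero set of a scalar constraint along the ray through $u-v$ and then invoke the implicit function theorem. Fix $u\in\mathscr{M}^{+(-)}_\mu$ and define $\Psi:\R\times H^{1,\lambda}_0(\Omega)\to\R$ by
\[
\Psi(t,v)=\tau\big(t(u-v)\big)=t^2\|u-v\|_\lambda^2-\mu t^{r+1}\int_\Omega g|u-v|^{r+1}\,dz-t^{s+1}\int_\Omega h|u-v|^{s+1}\,dz .
\]
Since $u\in\mathscr{M}_\mu$ we have $\Psi(1,0)=\tau(u)=0$, and the requirement $F^{+(-)}(v)(u-v)\in\mathscr{M}_\mu$ is exactly the equation $\Psi\big(F^{+(-)}(v),v\big)=0$. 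Hence it suffices to solve $\Psi(t,v)=0$ for $t=F^{+(-)}(v)$ in a neighbourhood of $(t,v)=(1,0)$.

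Next I would verify the hypotheses of the implicit function theorem. The map $\Psi$ is $C^1$ because $I_\mu$, and therefore $\tau$, is $C^1$ on $H^{1,\lambda}_0(\Omega)$ while $(t,v)\mapsto t(u-v)$ is smooth; the Nemytskii terms are controlled by $(g_1)$, $(h_1)$ and the embedding of Lemma \ref{Emb}. The decisive point is non-degeneracy in the $t$-variable:
\[
\partial_t\Psi(1,0)=2\|u\|_\lambda^2-\mu(r+1)\int_\Omega g|u|^{r+1}\,dz-(s+1)\int_\Omega h|u|^{s+1}\,dz=\langle\tau'(u),u\rangle .
\]
By the identity (\ref{TT6}) this equals $(1-r)\|u\|_\lambda^2-(s-r)\int_\Omega h|u|^{s+1}\,dz$, which is precisely the denominator in the claimed formula. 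Since $u\in\mathscr{M}^+_\mu\cup\mathscr{M}^-_\mu$ and Lemma \ref{CL}(i) gives $\mathscr{M}^0_\mu=\emptyset$ for $\mu\in(0,\widetilde{\mu})$, this quantity is strictly positive (respectively negative) and, in particular, nonzero.

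With these checks in place the implicit function theorem yields $\epsilon>0$ and a $C^1$ map $F^{+(-)}:B(0,\epsilon)\to\R$ with $F^{+(-)}(0)=1$ and $\Psi\big(F^{+(-)}(v),v\big)\equiv 0$, i.e. $F^{+(-)}(v)(u-v)\in\mathscr{M}_\mu$. Differentiating this identity at $v=0$ gives $\langle (F^{+(-)})'(0),v\rangle=-\langle\partial_v\Psi(1,0),v\rangle/\partial_t\Psi(1,0)$; computing the directional derivative $\langle\partial_v\Psi(1,0),v\rangle$ produces the numerator $2\int_\Omega D_\lambda u\cdot D_\lambda v\,dz-\mu(1+r)\int_\Omega g|u|^{r-1}uv\,dz-(1+s)\int_\Omega h|u|^{s-1}uv\,dz$ (up to the evident typographical omission of $v$ and of $\mu$ in the displayed statement), whence the asserted formula. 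Finally, to upgrade membership from $\mathscr{M}_\mu$ to $\mathscr{M}^{+(-)}_\mu$, I observe that $v\mapsto\langle\tau'\big(F^{+(-)}(v)(u-v)\big),F^{+(-)}(v)(u-v)\rangle$ is continuous and equals $\langle\tau'(u),u\rangle\neq 0$ at $v=0$; shrinking $\epsilon$ preserves its sign, so $F^{+(-)}(v)(u-v)$ lies in the same component of the splitting.

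The main obstacle is the non-degeneracy $\partial_t\Psi(1,0)\neq 0$: this is exactly where Lemma \ref{CL}(i) is indispensable, since on $\mathscr{M}^0_\mu$ one has $\langle\tau'(u),u\rangle=0$ and the implicit function theorem would break down. The only other delicate issue is the $C^1$ regularity of the concave Nemytskii functional as $r\to 0$, which is subsumed in the standing assumption that $I_\mu$ is $C^1$.
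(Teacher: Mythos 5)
Your proposal is correct and follows essentially the same route as the paper: both apply the implicit function theorem to the scalar constraint $(t,v)\mapsto\tau\bigl(t(u-v)\bigr)$ at the point $(1,0)$, identify the $t$-derivative there with $\langle\tau'(u),u\rangle=(1-r)\|u\|_\lambda^2-(s-r)\int_\Omega h|u|^{s+1}\,dz$, and then upgrade membership from $\mathscr{M}_\mu$ to $\mathscr{M}^{+(-)}_\mu$ by continuity and sign preservation after shrinking $\epsilon$. One small correction: Lemma \ref{CL}(i) is not actually needed for the non-degeneracy, since the hypothesis $u\in\mathscr{M}^{+(-)}_\mu$ already gives $\langle\tau'(u),u\rangle>0$ (resp. $<0$) by the very definition of the splitting, which is exactly how the paper argues.
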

\begin{proof} 
Let us assume that $u\in\mathscr{M}_\mu^+$. We define a function $H:\R\times H^{1,\lambda}_0(\Omega)\to\R$ by
$$H(\eta,v)=\langle I'_\mu(\eta(u-v)),\eta(u-v)\rangle.$$
We also observe $H(1,0)=0$ and 
\begin{align*}
    \frac{d}{d\eta}H(1,0)&=2\int_\Omega |\nabla_\lambda u|^2\, dz-\mu(r+1)\int_\Omega g|u|^{1+r}\ dz-(s+1)\int_\Omega h|u|^{s+1}\ dz\\
    &=(1-r)\int_\Omega |\nabla_\lambda u|^2\, dz-(s-r)\int_\Omega h|u|^{s+1}\ dz>0\textcolor{black}{.}
\end{align*}
Hence, by \textcolor{black}{implicit function theorem}, there exist $\epsilon>0$ and a differentiable function $F^+:B(0,\epsilon)\to\R$ such that $F^+(0)=1$ and $H(F^+(v),v)=0$ for all $v\in B(0,\epsilon)$. Moreover, $$\langle (F^+)'(0),v\rangle=\frac{2\int_\Omega \nabla_\lambda u\cdot \nabla_\lambda v\ dz-(1+r)\int_\Omega g|u|^{r-1}uv\ dz-(1+s)\int_\Omega h|u|^{s-1}uv\ dz}{(1-r)\int_\Omega |\nabla_\lambda u|^2\ dz-(s-r)\int_\Omega h|u|^{1+s}\ dz}.$$
Since $(1-r)\int_\Omega |\nabla_\lambda u|^2\, dz-(s-r)\int_\Omega h|u|^{s+1}\ dz>0$ and the map $u\to \langle\tau'(u),u\rangle$ is continuous, so for sufficiently small $\epsilon>0$ we have for every $v\in B(0,\epsilon)$,
$$(1-r)\int_\Omega |\nabla_\lambda(F^+(v) (u-v)|^2\, dz-(s-r)\int_\Omega h|F^+(v)(u-v)|^{s+1}\ dz>0.$$ Hence, 
$F^+(v)(u-v)\in\mathscr{M}_\mu^+.$ By a similar argument, one can prove the existence of $F^-$.
\end{proof}

\section{ Proof of Theorem \ref{MT1}}\label{PMT1}
 Throughout this section, we always assume $0<\mu<\mu_1:=\min\{ \mu_0,\Tilde{\mu}\}$, where $\mu_0$ and $\Tilde{\mu}$ appeared in Lemma \ref{Char} and Lemma \ref{CL}, respectively. We split our proof into two steps. \\
\textbf{Step-I:} Here we will prove that there exist two sequences $\{u_n\}\subset \mathscr{M}_\mu$ and $\{v_n\}\subset \mathscr{M}^-_\mu$ such that 
$I_\mu(u_n)\to m_\mu,\ \|I'(u_n)\|\to 0$ and $I_\mu(v_n)\to m^-_\mu,\ \|I'(v_n)\|\to 0.$\\
\textbf{proof:} We will prove the existence of $\{v_n\}$ and the other one is similar. \textcolor{black}{To this end, note that for the above choice of $\mu$, $\mathcal{M}_\mu^-$ is closed and hence complete with respect to the inherited topology. Since $I_\mu\big|_{\mathcal{M}_\mu^-}$ is lower semi-continuous and bounded from below, by Ekeland's variational principle \cite[Theorem 1.1]{IE},} there exists a minimizing sequence $\{v_n\}\subset\mathscr{M}^-_\mu$ such that
\begin{align}
    &I_\mu(v_n)\leq m^-_\mu+\frac{1}{n},\\
    \text{ and }&\nonumber\\
    &I_\mu(v_n)\leq I_\mu(w)+\frac{1}{n}\|v_n-w\|_\lambda\quad\mbox{ for all } w\in \mathscr{M}^-_\mu\label{EK}.
\end{align}
Now we show that $\|I'_\mu(v_n)\|\to 0$ as $n\to\infty$. It is enough to prove that there exists a constant $C>0$ (independent of $n$) such that $\langle I'_\mu(v_n),w\rangle\leq\frac{C}{n}$ for every $w\in H^{1,\lambda}_0(\Omega)$ with $\|w\|_\lambda=1$ and for all $n$.\\
Let $w\in H^{1,\lambda}_0(\Omega)$ with $\|w\|_\lambda=1$ be a fixed element and $n\in \N$. Using Lemma \ref{L}, we have a function $F_n: B_{\epsilon_n}(0)\to \R$ such that $F_n(v)(v_n-v)\in \mathscr{M}^-_\mu$ and $$\langle F'_n(0),v\rangle=\frac{2\int_\Omega \nabla_\lambda v_n\cdot \nabla_\lambda v\ dz-\mu(r+1)\int_\Omega g |v_n|^{r-1}v_nv \ dz-(s+1)\int_\Omega h|v_n|^{s-1}v_nv \ dz }{(1-r)\int_\Omega |\nabla_\lambda v_n|^2\ dz-(s-r)\int_\Omega h|v_n|^{s+1}\ dz}.$$
For $0<\delta<\epsilon_n$,\ define $w_\delta=F_n( w\delta )(v_n-w\delta)$. Using (\ref{EK}), we have
\begin{align}
    I_\mu(w_\delta)-I_\mu(v_n)\geq -\frac{\|w_\delta-v_n\|_\lambda}{n},
\end{align}
which implies
\begin{align}\label{T}
    \langle I'_\mu(v_n), w_\delta-v_n\rangle+o(\|w_\delta-v_n\|_\lambda)\geq -\frac{\|w_\delta-v_n\|_\lambda}{n}.
\end{align}
Hence, using the definition of $w_\delta$ and (\ref{EK}), (\ref{T}), we obtain
\begin{align}
    -\delta \langle I'_\mu(v_n), w\rangle&+(F_n(w\delta)-1)\langle I'_\mu(v_n)-I'_\mu(w_\delta),v_n-w_\delta\rangle+(F_n(w\delta)-1)\langle I'_\mu(w_\delta),v_n-w_\delta\rangle \nonumber\\
    &+o(\|w_\delta-v_n\|_\lambda)\geq -\frac{\|w_\delta-v_n\|_\lambda}{n}.
\end{align}
Divide both the sides by $-\delta$, we get 
\begin{align}\label{T1}
    \langle I'_\mu(v_n), w\rangle\leq \frac{(F_n(w\delta)-1)}{\delta}\langle I'_\mu(v_n)&-I'_\mu(w_\delta),v_n-w_\delta\rangle+\frac{(F_n(w\delta)-1)}{\delta}\langle I'_\mu(w_\delta),v_n-w_\delta\rangle \nonumber\\
    &+\frac{1}{\delta}o(\|w_\delta-v_n\|_\lambda)+\frac{\|w_\delta-v_n\|_\lambda}{n\delta}.
\end{align}
 We have $\|w_\delta-v_n\|_\lambda\leq \|(F_n(w\delta)-1)v_n-F_n(w\delta)w\delta\|\leq|(F_n(w\delta)-1)\|v_n\|_\lambda+|F_n(w\delta)|\delta$, which implies
 \begin{align}
     \lim_{\delta\to 0}\frac{\|w_\delta-v_n\|_\lambda}{\delta}\leq \|F'_n(0)\|\|v_n\|_\lambda+1\leq K\|F'_n(0)\|+1.
 \end{align}
Taking $\delta\to 0$ in (\ref{T1}) and using the above fact, we obtain
\begin{align}
  \langle I'_\mu(v_n), w\rangle\leq \frac{C}{n}\left (\|F'_n(0)\|+1\right ).  
\end{align}
Since $w$ is arbitrary, \textcolor{black}{we have
\begin{align}\label{TT2}
  \|I'_\mu(v_n)\|\leq \frac{C}{n}\left (\|F'_n(0)\|+1\right ).  
\end{align} 
Hence, to conclude our claim, it suffices} to show that the sequence $\{\|F'_n(0)\|\}$ is bounded. \textcolor{black}{Form Lemma \ref{L}, Lemma \ref{Emb} and H\"{o}lder inequality,} we have
\begin{align}
    |\langle F'_n(0),v\rangle|\leq \frac{C\|v\|_\lambda}{|(1-r)\|v_n\|_\lambda^2-(s-r)\int_\Omega h|v_n|^{s+1} dz|}.
\end{align}
It is sufficient to prove that the denominator is bounded below by some positive constant. If possible, let up to a subsequence
\begin{align}
    \left |(1-r)\|v_n\|_\lambda^2-(s-r)\int_\Omega h|v_n|^{s+1} dz\right |\to 0 \text{ as } n\to \infty,
\end{align}
that is,
\begin{align}\label{SAN1}
    (1-r)\|v_n\|_\lambda^2-(s-r)\int_\Omega h|v_n|^{s+1} dz=o(1).
\end{align}
For $v_n\in\mathscr{M}^-_\mu$, the identity (\ref{TT6}) leads to conclude that
\begin{align*}
  (1-r)\|v_n\|_\lambda^2<(s-r)\int_\Omega h|v_n|^{s+1} dz.  
\end{align*}
Utilize the condition ($h_1$), H\"{o}lder inequality, Lemma \ref{Emb} and the above inequality to deduce
\begin{align}\label{T7}
    \|v_n\|_\lambda\geq \left (\frac{(1-r)}{S_p^{s+1}(s-r)\|h\|_b}\right )^\frac{1}{s-1}.
\end{align}
Combining (\ref{SAN1}) and (\ref{T7}), we obtain that for large $n\in\N$,
\begin{align}\label{SA3}
    \int_\Omega h|v_n|^{1+s}\, dz\geq \alpha>0, \text{ for some }\alpha>0.
\end{align}
Now, we have
\begin{align}\label{SA2}
I_\mu(v_n)=\frac{1}{2}\int_\Omega |\nabla_\lambda v_n|^2\;dz-\frac{\mu}{1+r}\int_\Omega g|v_n|^{1+r}\;dz-\frac{1}{1+s}\int_\Omega h|v_n|^{1+s}\; dz=m^-_\mu+o(1).    
\end{align}
From (\ref{SAN}), (\ref{SAN1}), and (\ref{SA2}) we get
\begin{align}
  \left \{  \left (\frac{1}{2}-\frac{1}{r+1}\right )\left (\frac{s-r}{1-r}\right )-\left( \frac{1}{s+1}-\frac{1}{r+1}\right )\right \}\int_\Omega h|v_n|^{s+1}=m^-_\mu+o(1)
\end{align}
that is,
\begin{align}\label{SA4}
    \frac{(s-r)(1-s)}{2(1+s)(1+r)}\int_\Omega h|v_n|^{s+1}=m^-_\mu+o(1).
\end{align}
 Combining (\ref{SA3}) and (\ref{SA4}), we obtain $m^-_\mu<0$. For large enough $n,$ we have 
 \begin{align}
     I_\mu(v_n)=\left (\frac{1}{2}-\frac{1}{s+1}\right )\|v_n\|_\lambda^2-\mu\left (\frac{1}{1+r}-\frac{1}{s+1}\right)\int_\Omega g|v_n|^{1+r}\;dz<\frac{m_\mu^-}{2}<0.
 \end{align}
This yields
\begin{align}
    \frac{s-1}{2(s+1)}\|v_n\|^2<\mu\frac{(s-r)}{(s+1)(r+1)}\int_\Omega g|v_n|^{1+r}\;dz\leq \mu\frac{S_q^{1+r}(s-r)}{(s+1)(r+1)}\|g\|_a\|v_n\|_\lambda^{1+r},
\end{align}
that is
\begin{align}\label{SA5}
    \|v_n\|<\left ( \frac{2S_q^{1+r}\mu(s-r)\|g\|_a}{(s-1)(r+1)}  \right )^\frac{1}{1-r}.
\end{align}
Now, since $v_n\in\mathcal{M}^-_\mu$ and (\ref{SAN1}) holds,
\begin{align}\label{SA6}
o(1)&=\left(\frac{1-r}{s-r}\right)^\frac{s}{s-1}\left(\frac{s-1}{1-r}\right)\left(\frac{\|v_n\|_\lambda^{2s}}{\int_\Omega h|v_n|^{1+s}\;dz}\right )^\frac{1}{s-1}-\mu\int_\Omega g|v_n|^{1+r}\;dz\nonumber\\
&\geq \left(\frac{1-r}{s-r}\right)^\frac{s}{s-1}\left(\frac{s-1}{1-r}\right)\left(\frac{\|v_n\|_\lambda^{2s}}{S_p^{s+1}\|h\|_b\|v_n\|_\lambda^{1+s}}\right )^\frac{1}{s-1}-S_q^{1+r}\mu\|g\|_a\|v_n\|_\lambda^{1+r}\nonumber\\
&\geq \|v_n\|_\lambda^{1+r}\left (      \left(\frac{1-r}{s-r}\right)^\frac{s}{s-1}\left(\frac{s-1}{1-r}\right)\left(\frac{1}{S_p^{1+r}\|h\|_b}  \right )^\frac{1}{s-1}\|v_n\|_\lambda^{-r}-\mu S_q^{1+r}\|g\|_a  \right )\nonumber\\
&\geq \|v_n\|_\lambda^{1+r}\left (      \left(\frac{1-r}{s-r}\right)^\frac{s}{s-1}\left(\frac{s-1}{1-r}\right)\left(\frac{1}{S_p^{s+1}\|h\|_b}  \right )^\frac{1}{s-1}\left ( \frac{2\mu S_q^{1+r}(s-r)\|g\|_a}{(s-1)(r+1)}  \right )^\frac{-r}{1-r}-\mu S_q^{1+r}\|g\|_a  \right ).
\end{align}
In the last inequality, we used the inequality (\ref{SA5}). There exists $\mu_2>0$ such that for every $\mu<\mu_2,$ $$\left (      \left(\frac{1-r}{s-r}\right)^\frac{s}{s-1}\left(\frac{s-1}{1-r}\right)\left(\frac{1}{S_p^{1+s}\|h\|_b}  \right )^\frac{1}{s-1}\left ( \frac{2\mu S_q^{1+r}(s-r)\|g\|_a}{(s-1)(r+1)}  \right )^\frac{-r}{1-r}-\mu S_q^{1+r}\|g\|_a  \right )>0.$$
Thus, for every $\mu<\mu_3:=\min\{\mu_1,\mu_2\}$, using (\ref{T7}), (\ref{SA6}) we deduce

$$o(1)=\left(\frac{1-r}{s-r}\right)^\frac{s}{s-1}\left(\frac{s-1}{1-r}\right)\left(\frac{\|v_n\|_\lambda^{2s}}{\int_\Omega h|v_n|^{1+s}\;dz}\right )^\frac{1}{s-1}-\mu\int_\Omega g|v_n|^{1+r}\;dz\geq \beta>0,$$
\textcolor{black}{for some $\beta>0$, which is a contradiction}. Hence, there exists a sequence $\{v_n\}\subset\mathcal{M}^-_\mu$ such that $$I_\mu(v_n)\to m^-_\mu \text{ and } \|I_\mu(v_n)\|\to 0 \text{ as }n\to\infty.$$
In order to prove the existence of the sequence $\{u_n\}$, 
by Ekeland's variational principle, there exists a minimizing sequence $\{u_n\}\subset\mathscr{M}_\mu$ such that
\begin{align}
    &I_\mu(u_n)\leq m_\mu+\frac{1}{n},\\
    \text{ and }&\nonumber\\
    &I_\mu(u_n)\leq I_\mu(w)+\frac{1}{n}\|u_n-w\|_\lambda\mbox{ for all } w\in \mathscr{M}_\mu\label{EK1}.
\end{align}
Due to the fact (\ref{RR}), for large enough $n,$

$$\left (\frac{1}{2}-\frac{1}{s+1}\right )\|u_n\|^2_\lambda-\mu\left (\frac{1}{r+1}-\frac{1}{s+1}\right )\int_\Omega g|u_n|^{r+1}\ dz<\frac{m_\mu}{2}<0.$$
Thus, we get 
\begin{align}
    &\int_\Omega g|u_n|^{1+r}\;dz>\frac{-m_\mu(1+r)(s+1)}{2(s-r)}>0,\label{RT1}\\
    \text{ and }&\nonumber\\
    &\left (\frac{1}{2}-\frac{1}{s+1}\right )\|u_n\|^2_\lambda\leq \mu\left (\frac{1}{r+1}-\frac{1}{s+1}\right )\int_\Omega g|u_n|^{r+1}\ dz.\label{RT2}
\end{align}
These facts yield 
$$\|u_n\|_\lambda\geq \left(\frac{-m_\mu(1+r)(s+1)}{2(s-r)S_q^{1+r}\|g\|_a}\right )^\frac{1}{1+r} \text{ and }\|u_n\|\leq \left (\frac{2\mu(s-r)S_q^{1+r}\|g\|_a}{(r+1)(s-1)}\right )^\frac{1}{1-r}.$$
The rest of the proof is exactly the same as in the previous one.

\textbf{Step-II:}
We claim that the minimizers $m_\mu$ and $m^-_\mu$ are achieved by two critical points of the function $I_\mu$.\\
\textbf{Proof:} \textcolor{black}{First, we will show that $m^-_\mu$ is achieved by a critical point.}\\
\textbf{ First Solution:}
Due to Lemma \ref{CL}(ii), the above minimizing sequence $\{v_n\}$ is bounded. Hence, there exists $v^*\in H^{1,\lambda}_0(\Omega)$ such that up to a sub-sequence the following holds:
\begin{align}
    \begin{cases}
        v_n\rightharpoonup v^* \text{ weakly in } H^{1,\lambda}_0(\Omega),\\
        v_n\to v^* \text{ strongly in } L^p(\Omega) \text{ for } 1\leq p<2^*_\lambda,\\
        v_n\to v^* \text{ pointwise a.e in } \Omega.
    \end{cases}
\end{align}
Using these facts, we have 
\begin{align}\label{SOL}
    \langle I'_\mu(v^*),\phi\rangle=\lim_{n\to\infty} \langle I'_\mu(v_n),\phi\rangle=0 \text{ for all } \phi\in C^\infty_c(\Omega).
\end{align}
Hence, $v^*$ is a critical point of $I_\mu$. Now we claim that $v_n\to v^*$ in $H^{1,\lambda}_0(\Omega)$. It is enough to prove $\|v_n\|_\lambda\to\|v^*\|_\lambda$ as $n\to\infty.$ If $\|v^*\|_\lambda<\liminf_{n\to\infty}\|v_n\|_\lambda$, then we obtain
\begin{align*}
   \langle I'_\mu(v^*),v^*\rangle=&\int_\Omega |\nabla_\lambda v^*|^2\ dz-\mu\int_\omega g|v^*|^{r+1}\ dz-\int_\Omega h|v^*|^{s+1}\ dz\\
   &<\liminf_{n\to\infty}\left [ \int_\Omega |\nabla_\lambda v_n|^2\ dz-\mu\int_\omega g|v_n|^{r+1}\ dz-\int_\Omega h|v_n|^{s+1}\ dz \right ]\\
   &=\liminf_{n\to\infty} \langle I'_\mu(v_n),v_n\rangle=0,
\end{align*}
which contradicts the equality (\ref{SOL}). Consequently, $v_n\to v^*$ in $H^{1,\lambda}_0(\Omega)$. Moreover, $$\langle\tau'(v^*),v^*\rangle=\lim \langle\tau'(v_n),v_n\rangle\leq 0.$$ Thus, $v^*\in\mathcal{M}^-_\mu$ is a critical point of $I_\mu$ with $I_\mu(v^*)=m^-_\mu$. \textcolor{black}{Since $\mathcal{M}^-_\mu\subset \mathcal{M}_\mu$, from the definition of $\mathcal{M}_\mu$ (see \ref{N1}), one has $v^*\not\equiv 0$.} Hence, if necessary, by taking $|v^*|$, we can conclude that $v^*$ is a non-trivial, non-negative solution of \textcolor{black}{problem (\ref{ME})}.\\
\textbf{Second  Solution:}
Let $\{u_n\}$ be the minimizing sequence obtained in Step-I. By Lemma \ref{CL}, $\{u_n\}$ is bounded in $H^{1,\lambda}_0(\Omega)$, and hence there exists $u^*\in H^{1,\lambda}_0(\Omega)$ such that the following holds (up to a sub-sequence):  
\begin{align}
    \begin{cases}
        u_n\rightharpoonup u^* \text{ weakly in } H^{1,\lambda}_0(\Omega),\\
        u_n\to u^* \text{ strongly in } L^p(\Omega) \text{ for } 1\leq p<2^*_\lambda,\\
        u_n\to u^* \text{ pointwise a.e in } \Omega.
    \end{cases}
\end{align}
It follows from these facts that for all  $\phi\in C^\infty_c(\Omega)$, we have
\begin{align}\label{SOLL}
    \langle I'_\mu(u^*),\phi\rangle=\lim_{n\to\infty} \langle I'_\mu(u_n),\phi\rangle=0.
\end{align}
Consequently, $u^*$ is a critical point of $I_\mu$, and hence $u^*\in M_\mu$. Now,
applying $\|u^*\|_\lambda\leq\liminf\|u_n\|_\lambda$, we deduce
\begin{align}
   m_\mu=\liminf I_\mu(u_n)&=\liminf\left ( \frac{1}{2}\int_\Omega|\nabla_\lambda u_n|^2\, dz-\frac{\mu}{1+r}\int_\Omega g|u_n|^{1+r}\, dz-\frac{1}{1+s}\int_\Omega h|u_n|^{1+s}\, dz\right )\nonumber\\
    &\geq \frac{1}{2}\int_\Omega|\nabla_\lambda u^*|^2\, dz-\frac{\mu}{1+r}\int_\Omega g|u^*|^{1+r}\, dz-\frac{1}{1+s}\int_\Omega h|u^*|^{1+s}\, dz\nonumber\\
    &=I_\mu(u^*)\geq m_\mu.
\end{align}
Thus, $u^*\in\mathscr{M}_\mu$ is a solution to \textcolor{black}{problem (\ref{ME})} such that $I_\mu(u^*)=m_\mu$. Furthermore, $u^*\in\mathcal{M}^+_\mu,$ otherwise if $u^*\in\mathcal{M}^-_\mu,$ then by Lemma \ref{Char} together with (\ref{RT1}), there exists $t^+<1$ such that $t^+u^*\in\mathcal{M}^+_\mu$ and $$I_\mu(t^+u^*)<I_\mu(u^*)=m_\mu,$$
which is a contradiction. Hence $u^*\in\mathscr{M}^+_\mu$ and $I_\mu(u^*)=m_\mu=m^+_\mu.$ \textcolor{black}{Since $\mathcal{M}^+_\mu\subset \mathcal{M}_\mu$, from the definition of $\mathcal{M}_\mu$ (see \ref{N1}), we can conclude that $u^*\not\equiv 0$.}
Hence, $u^*,\; v^*\in H^{1,\lambda}_0(\Omega)$ are two non-trivial, nonnegative solutions of (\ref{ME}).

In particular, if $n=1,\lambda\geq 1;$ and $g,h$ are non-negative then \cite[Theorem 2.6]{KS} ensures that the solutions $u^*$ and $v^*$ are strictly positive in $\Omega.$ This finishes the proof of Theorem \ref{MT1}.
\section{ Proof of Theorem \ref{MT2}}\label{PMT2}
In the sequel, we always assume that all the assumptions of Theorem \ref{MT2} are satisfied and $I_\mu$ is the functional defined in (\ref{CF}). That is, $I_\mu: H^{1,\lambda}_0(\Omega)\to\R$, defined as 
 \begin{align}\label{CF1}
     I_\mu(u)=\frac{1}{2}\int_\Omega |\nabla_\lambda u|^2\ dz-\frac{\mu}{r+1}\int_\Omega g\cdot (u^+)^{r+1}\ dz - \frac{1}{2^*_\lambda}\int_\Omega (u^+)^{2^*_\lambda}\ dz.
 \end{align}
 Observe that $\langle I'_\mu(u),u^-\rangle=-\|u^-\|_\lambda^2$, and this leads to conclude that every critical point of $I_\mu$ is non-negative. The following lemma provides the first solution to the \textcolor{black}{ problem (\ref{ME1})}.
\begin{lem}\label{Fsol}
     There exists $\mu^*>0$ such that for every $\mu\in(0,\mu^*)$, the \textcolor{black}{problem (\ref{ME1})} admits at least one non-zero solution $u_\mu$ with $I_\mu(u_\mu)<0$.
\end{lem}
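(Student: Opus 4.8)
The plan is to realize $u_\mu$ as a local minimizer of $I_\mu$ sitting strictly below the zero energy level inside a small ball. First I would record the elementary lower bound coming from H\"older's inequality (with the exponent $a$ of $(g_1)$), the Sobolev embedding of Lemma \ref{Emb}, and $g\geq 0$: for every $u\in H^{1,\lambda}_0(\Omega)$,
$$I_\mu(u)\geq \psi(\|u\|_\lambda):=\tfrac{1}{2}\|u\|_\lambda^2-\tfrac{\mu}{r+1}S_q^{r+1}\|g\|_a\|u\|_\lambda^{r+1}-\tfrac{1}{2^*_\lambda}\mathcal{S}_\lambda^{-2^*_\lambda/2}\|u\|_\lambda^{2^*_\lambda}.$$
The $\mu$-free part $\Phi(t)=\tfrac12 t^2-\tfrac{1}{2^*_\lambda}\mathcal{S}_\lambda^{-2^*_\lambda/2}t^{2^*_\lambda}$ has a strictly positive maximum at some $R>0$ independent of $\mu$; since $r+1<2$, for all small $\mu$ one has $\psi(R)>0$, so $I_\mu$ is bounded below on the closed ball $\overline{B}_R=\{\|u\|_\lambda\leq R\}$ and strictly positive on $\partial B_R$. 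Choosing $u_0\geq 0$ with $\int_\Omega g\,u_0^{r+1}\,dz>0$ (possible because $g\geq 0$ is not identically zero) and using $r+1<2<2^*_\lambda$, the map $t\mapsto I_\mu(tu_0)$ is negative for all small $t>0$; hence $c_0:=\inf_{\overline{B}_R}I_\mu<0$.

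Next I would apply Ekeland's variational principle on the complete metric space $\overline{B}_R$ to obtain a minimizing sequence $\{u_n\}$ with $I_\mu(u_n)\to c_0$ and $\|I'_\mu(u_n)\|\to 0$. Since $c_0<0<\inf_{\partial B_R}I_\mu$, the elements $u_n$ eventually lie in the open ball, so $\{u_n\}$ is a genuine (free) Palais--Smale sequence at the negative level $c_0$. Being bounded, it converges weakly to some $u_\mu$; passing to the limit in $\langle I'_\mu(u_n),\phi\rangle$ using strong $L^q$-convergence for the concave term and a.e.\ convergence together with $L^{2^*_\lambda}$-boundedness for the critical term shows $u_\mu$ is a critical point, with $\langle I'_\mu(u_\mu),u_\mu\rangle=0$. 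Since $\langle I'_\mu(u),u^-\rangle=-\|u^-\|_\lambda^2$, every critical point is non-negative.

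The crux is to upgrade weak to strong convergence in the presence of the critical exponent. Writing $w_n=u_n-u_\mu\rightharpoonup 0$, I would combine $\|u_n\|_\lambda^2=\|u_\mu\|_\lambda^2+\|w_n\|_\lambda^2+o(1)$, the strong convergence of the concave term, and the Brezis--Lieb lemma for the critical term. From $\langle I'_\mu(u_n),u_n\rangle\to 0$ and $\langle I'_\mu(u_\mu),u_\mu\rangle=0$ this gives $\|w_n\|_\lambda^2-\int_\Omega |u_n^+-u_\mu^+|^{2^*_\lambda}\,dz\to 0$, so $b:=\lim\|w_n\|_\lambda^2$ equals the limit of the critical integral. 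The inequality defining $\mathcal{S}_\lambda$ then forces the dichotomy $b=0$ or $b\geq \mathcal{S}_\lambda^{Q/2}$, while the energy splits as $c_0=I_\mu(u_\mu)+\tfrac1Q b$. Using the critical-point identity one has $I_\mu(u_\mu)=\tfrac1Q\|u_\mu\|_\lambda^2-\mu\tfrac{2^*_\lambda-r-1}{(r+1)2^*_\lambda}\int_\Omega g(u_\mu^+)^{r+1}\,dz\geq -C\mu^{2/(1-r)}$ (minimizing the resulting function of $\|u_\mu\|_\lambda$ after H\"older and Sobolev). Then the case $b\geq \mathcal{S}_\lambda^{Q/2}$ would give $c_0\geq \tfrac1Q\mathcal{S}_\lambda^{Q/2}-C\mu^{2/(1-r)}>0$ for $\mu$ small, contradicting $c_0<0$. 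Hence, shrinking $\mu^*$ if necessary, $b=0$, so $u_n\to u_\mu$ strongly, $I_\mu(u_\mu)=c_0<0$, and $u_\mu\neq 0$ is the desired non-zero solution.

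I expect the main obstacle to be exactly this compactness step: guaranteeing that the level $c_0$ stays below the first noncompactness threshold $\tfrac1Q\mathcal{S}_\lambda^{Q/2}$, which is precisely what the smallness of $\mu$ together with the lower bound on $I_\mu(u_\mu)$ are designed to secure. A secondary technical point is the validity of the Brezis--Lieb splitting for the positive part $u^+$ in the Grushin Sobolev space, which follows from a.e.\ convergence and boundedness in $L^{2^*_\lambda}(\Omega)$.
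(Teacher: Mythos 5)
Your proposal is correct, but it resolves the key step by a genuinely different route than the paper. Up through the ball geometry, the Ekeland sequence at the negative level, and the identification of the weak limit $u_\mu$ as a non-negative critical point, the two arguments coincide (your choice of $u_0$ with $\int_\Omega g\,u_0^{r+1}\,dz>0$ is in fact slightly more careful than the paper's claim that $I_\mu(tu)<0$ for small $t$ for \emph{every} non-negative $u$, which requires exactly this extra condition on $u$). The divergence is in how one shows $u_\mu\neq 0$ with negative energy. The paper never proves strong convergence: it applies weak lower semicontinuity (via Fatou) to the quantity
\begin{align*}
I_\mu(u_n)-\tfrac{1}{2^*_\lambda}\langle I'_\mu(u_n),u_n\rangle
=\Bigl(\tfrac12-\tfrac{1}{2^*_\lambda}\Bigr)\|u_n\|_\lambda^2-\mu\Bigl(\tfrac{1}{1+r}-\tfrac{1}{2^*_\lambda}\Bigr)\int_\Omega g\,(u_n^+)^{1+r}\,dz,
\end{align*}
in which the critical terms cancel, leaving only a weakly lower semicontinuous term plus a strongly convergent one; this gives $\beta\geq I_\mu(u_\mu)\geq\beta$ directly, with no concentration analysis and no further shrinking of $\mu^*$. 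You instead run a Brezis--Nirenberg type analysis: Brezis--Lieb splitting, the dichotomy $b=0$ or $b\geq\mathcal{S}_\lambda^{Q/2}$, and the quantitative bound $I_\mu(u_\mu)\geq -C\mu^{2/(1-r)}$ to exclude concentration for small $\mu$. This is correct (your energy splitting $c_0=I_\mu(u_\mu)+\tfrac1Q b$ and the minimization giving the $\mu^{2/(1-r)}$ bound both check out), and it buys more: strong convergence of the sequence and the threshold mechanism that the paper only introduces in the subsequent $(PS)_c$ lemma. The cost is a possibly smaller $\mu^*$ and heavier machinery; note also that your own splitting already yields $I_\mu(u_\mu)=c_0-\tfrac1Q b\leq c_0<0$ for $b\geq 0$, so the lemma's conclusion follows even without excluding the concentration alternative.
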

\begin{proof}
Using H\"{o}lder inequality and Lemma \ref{Emb}, we have 
\begin{align}
    I_\mu(u)&=\frac{1}{2}\int_\Omega |\nabla_\lambda u|^2\ dz-\frac{\mu}{r+1}\int_\Omega g\cdot(u^+)^{r+1}\ dz - \frac{1}{2^*_\lambda}\int_\Omega h\cdot(u^+)^{2^*_\lambda}\, dz\nonumber\\
     &\geq \frac{1}{2}\|u\|_\lambda^2-\frac{\mu S_q^{1+r}}{r+1}\|g\|_a\|u\|_\lambda^{1+r}-\frac{S_p^{2^*_\lambda}\|h\|_b}{2^*_\lambda}\|u\|_\lambda^{2^*_\lambda}.
\end{align}
Let us fix a small $\delta>0$ such that $\frac{\delta^2}{4}-\frac{S_p^{2^*_\lambda}\|h\|_b\delta^{2^*_\lambda}}{2^*_\lambda}>0$, and choose $\mu^*=\frac{(1+r)\delta^{1-r}}{8\|g\|_aS_q^{1+r}}.$ If $\mu\in(0, \mu^*)$, then we have
\begin{align}\label{MP1}
    I_\mu(u)\geq
    \begin{cases}
    \frac{\delta^2}{8}>0 \text{ if } \|u\|_\lambda=\delta\\
    -\alpha(\delta) \text{ if } \|u\|_\lambda\leq\delta,
    \end{cases}
\end{align}
for some $\alpha(\delta)>0$. For any non-negative $u\in H^{1,\lambda}_0(\Omega)\setminus{0}$ and $t>0$, we have
\begin{align}
    I_\mu(tu)&=\frac{t^2}{2}\int_\Omega |\nabla_\lambda u|^2\ dz-\frac{\mu t^{r+1}}{r+1}\int_\Omega g |u|^{r+1}\ dz - \frac{t^{2^*_\lambda}}{2^*_\lambda}\int_\Omega |u|^{2^*_\lambda}\ dz\nonumber\\
    &<0,\ \text{ for small enough t. }
\end{align}
Hence, we denote $B_\delta:=\{u\in H^{1,\lambda}_0(\Omega):\ \|u\|_\lambda\leq\delta\}$ and by the above discussion it is clear that
\begin{align}
    -\infty<\beta:=\inf_{B_\delta} I_\mu(u)<0.
\end{align}
By Ekeland's variational principle, there exists a sequence $\{u_n\}\subset B_\delta$ such that $$I_\mu(u_n)\to \beta\text{ and } \|I'_\mu(u_n)\|\to 0\text{ as } n\to\infty.$$
Since $\{u_n\}$ is bounded, there exists $u_\mu\in B_\delta$ such that up to a sub-sequence, the following holds:
\begin{align}\label{T3}
    \begin{cases}
        u_n\rightharpoonup u_\mu \text{ weakly in } H^{1,\lambda}_0(\Omega)\\
        u_n\to u_\mu \text{ strongly in } L^p(\Omega) \text{ for } 1\leq p<2^*_\lambda\\
        u_n\to u_\mu \text{ pointwise a.e. in } \Omega.
    \end{cases}
\end{align}
Since $\{u_n^+\}$ is also bounded in $H^{1,\lambda}_0(\Omega)$, so up to a subsequence $u_n^+\to u_\mu^+$ strongly in $L^p(\Omega)$ for all $1\leq p<2^*_\lambda$, and  $(u_n^+)^{2^*_\lambda-1}\rightharpoonup (u_\mu^+)^{2^*_\lambda-1}$ weakly in $L^\frac{2^*_\lambda}{2^*_\lambda-1}(\Omega)$. It follows from these facts that 
\begin{align}
    o(1)=\langle I'_\mu(u_n),\phi\rangle=\langle I'_\mu(u_\mu),\phi\rangle+o(1),\text{ for all } \phi\in C^\infty_c(\Omega).
\end{align}
Therefore, $u_\mu$ is a critical point of $I_\mu.$ Applying Fatou's Lemma, we also obtain
\begin{align}
   \beta&=\liminf_{n\to\infty} \left ( I_\mu(u_n)-\frac{1}{2^*_\lambda}\langle I'_\mu(u_n),u_n\rangle\right )\nonumber\\
   &=\liminf_{n\to\infty}\left ( (\frac{1}{2}-\frac{1}{2^*_\lambda})\int_\Omega |\nabla_\lambda u_n|^2-(\frac{1}{1+r}-\frac{1}{2^*_\lambda})\mu\int_\Omega g\cdot(u^+_n)^{1+r}\right )\nonumber\\
    &\geq(\frac{1}{2}-\frac{1}{2^*_\lambda})\int_\Omega |\nabla_\lambda u_\mu|^2-(\frac{1}{1+r}-\frac{1}{2^*_\lambda})\mu\int_\Omega g\cdot(u^+_\mu)^{1+r}\nonumber\\
    &=I_\mu(u_\mu)-\frac{1}{2^*_\lambda}\langle I'_\mu(u_\mu),u_\mu\rangle= I_\mu(u_\mu)\geq \beta.
\end{align}
Consequently, $u_\mu\in\mathring{B}_\delta$ is a solution of (\ref{ME1}) with $I_\mu(u_\mu)=\beta<0.$ \textcolor{black}{Clearly, $u_\mu\not\equiv 0$, otherwise $\beta=0$.}
\end{proof}
We define 
\begin{align}\label{d1}
    \Tilde{c}=\inf\{\, I_\mu(v)+\frac{1}{Q}\mathcal{S}_\lambda^\frac{Q}{2}: v\in H^{1,\lambda}_0(\Omega) \text{ such that } I'_\mu(v)=0 \}
\end{align}
We will prove that the functional $I_\mu$ satisfies the Palais-Smale condition (abbreviated as $(PS)_c$) below the level $\Tilde{c}$, and here we use the argument of Brezis and Nirenberg \cite{BN}. 
\begin{lem}\label{BrezisNiren1}
     The functional $I_\mu$ satisfies $(PS)_c$ condition for every $c<\Tilde{c}$ and $\mu\in(0,\mu^*)$. Here $\mu^*$ is obtained in Lemma \ref{Fsol}.
\end{lem}

\begin{proof}
Let $c<\Tilde{c}$ and $\{u_n\}$ be a sequence in $H^{1,\lambda}_0(\Omega)$ such that 
\begin{align}\label{W5}
    \begin{cases}
        I_\mu(u_n)\to c \text{ as } n\to\infty\\
        I'_\mu(u_n)\to 0 \text{ in } (H^{1,\lambda}_0(\Omega))^*.
    \end{cases}
\end{align}
Applying this fact, we deduce that for large enough $n\in\N,$
\begin{align*}
    \frac{1}{Q}\|u_n\|^2_\lambda-\mu\left (\frac{1}{r+1}-\frac{1}{2^*_\lambda}\right )\int_\Omega g\cdot(u_n^+)^{r+1}=I_\mu(u_n)-\frac{1}{1+s}\langle I'_\mu(u_n),u_n\rangle\leq c+1+m\|u_n\|_\lambda,
    \end{align*}
  for some $m>0$. Using H\"{o}lder inequality, we obtain
    \begin{align}\label{BDD}
         \frac{1}{Q}\|u_n\|_\lambda^2-\mu\left (\frac{1}{r+1}-\frac{1}{2^*_\lambda}\right )\|g\|_a\|u_n\|_\lambda^{1+r}\leq c+1+m\|u_n\|_\lambda,
    \end{align}
   which yields that the sequence $\{u_n\}$ is bounded in $H^{1,\lambda}_0(\Omega)$. Consequently, we can assume that there exists $u\in H^{1,\lambda}_0(\Omega)$ such that up to a subsequence, the following holds.
   \begin{align}\label{Lab}
        \begin{cases}
       u_n\rightharpoonup u \text{ weakly in }H^{1,\lambda}_0(\Omega),\\
        u_n\to u \text{ strongly in } L^{q}(\Omega) \text{ for all }1\leq q< 2^*_\lambda,\\
        u_n\to u \text{ pointwise a.e. in }\Omega.
    \end{cases}
   \end{align}
 Using this fact, one has
\begin{align}\label{W3}
    o(1)=\langle I'_\mu(u_n),\phi\rangle=\langle I'_\mu(u),\phi\rangle+o(1) \text{ for all }\phi\in H^{1,\lambda}_0(\Omega).
\end{align}
Thus, $u$ is a critical point of $I_\mu$, and hence non-negative. Since $c<\Tilde{c}$, from the definition of $\Tilde{c}$, we have $c<I_\mu(u)+\frac{1}{Q}\mathcal{S}_\lambda^\frac{Q}{2}$. Next we prove that the sequence $\{u_n\}$ has a convergent subsequence in $H^{1,\lambda}_0(\Omega)$. We have
\begin{align}
    &\|u_n\|_\lambda^2=\|u_n-u\|_\lambda^2+\|u\|_\lambda^2+o(1)\label{W1},\\
    \text{ and }&\nonumber\\
    &\langle I_\mu'(u_n),u_n\rangle=\int_\Omega |\nabla_\lambda u_n|^2\, dz-\mu\int_\Omega g\cdot(u_n^+)^{1+r} \, dz-\int_\Omega(u_n^+)^{2^*_\lambda} dz=o(1).\label{TA1}
\end{align}
Thus, it is immediate from the facts (\ref{TA1}) and $\langle I'_\mu(u),u\rangle=0$ that
\begin{align}\label{W2}
    \int_\Omega |\nabla_\lambda u_n|^2\, dz-\int_\Omega|u_n|^{2^*_\lambda} dz=\mu\int_\Omega g(u^+)^{1+r} \, dz + o(1)=\int_\Omega |\nabla_\lambda u|^2\, dz-\int_\Omega(u^+)^{2^*_\lambda} dz+o(1).
\end{align}
Since $\{u_n^+\}$ is bounded in $L^{2^*_\lambda}(\Omega)$, applying Brezis-Lieb Lemma, one has 
\begin{align}\label{W4}
  \|u_n^+\|_{2^*_\lambda}^{2^*_\lambda} = \|(u_n-u)^+\|_{2^*_\lambda}^{2^*_\lambda}+\|u^+\|_{2^*_\lambda}^{2^*_\lambda}+o(1).
\end{align}
The identity (\ref{W1}), (\ref{W2}) and (\ref{W4}) imply that
\begin{align}\label{W8}
    \|u_n-u\|_\lambda^2=\|(u_n-u)^+\|_{2^*_\lambda}^{2^*_\lambda}+o(1)
\end{align}
We substitute (\ref{W1}) and (\ref{W4}) in the expression of $I_\mu(u_n)$ and using the fact (\ref{W5}), we obtain 
\begin{align*}
    \frac{1}{2}\left [\|u\|_\lambda^2+\|u_n-u\|_\lambda^2 \right ]-\frac{\mu}{r+1}\int_\Omega g\cdot(u_n^+)^{1+r}\, dz-\frac{1}{1+s}\left [\|(u_n-u)^+\|_{2^*_\lambda}^{2^*_\lambda}+\|u^+\|_{2^*_\lambda}^{2^*_\lambda}\right ]=c+o(1),
\end{align*}
which implies
\begin{align}\label{W9}
    I_\mu(u)+\left [\frac{1}{2}\|u_n-u\|_\lambda^2-\frac{1}{1+s} \|(u_n-u)^+\|_{2^*_\lambda}^{2^*_\lambda} \right ]=c+o(1).
\end{align}
Suppose $u_n$ does not converge to $u$ in $H^{1,\lambda}_0(\Omega)$, i.e., $\|u_n-u\|_\lambda\to d\neq 0$. From the definition of $\mathcal{S}_\lambda$, we get

\begin{align}
    \|u_n-u\|_\lambda^2\geq \mathcal{S}_\lambda\|u_n-u\|^2_{2^*_\lambda}\geq \mathcal{S}_\lambda\|(u_n-u)^+\|^2_{2^*_\lambda}
\end{align}
This fact, together with the identity (\ref{W8}), ensures that $d$ enjoys the following inequality:
$$d^2\geq \mathcal{S}_\lambda d^\frac{4}{2^*_\lambda}.$$
Thus, $d\geq \mathcal{S}_\lambda^\frac{Q}{4}$. Using this fact, we can infer from the identity (\ref{W9}) that $$c=I_\mu(u)+\frac{1}{Q}d^2\geq I_\mu(u)+\frac{1}{Q}\mathcal{S}_\lambda^\frac{Q}{2}$$
which is a contradiction as $c<I_\mu(u)+\frac{1}{Q}\mathcal{S}_\lambda^\frac{Q}{2}$. Consequently, $u_n\to u$ strongly in $H^{1,\lambda}_0(\Omega)$.
\end{proof}
We mention an important remark that will be useful in proving our main theorem.
\begin{rmk}
    If $u\in H^{1,\lambda}_0(\Omega)$ is the unique non-zero critical point with negative energy then $I_\mu$ satisfies $(PS)_c$ condition for every $c<I_\mu(u)+\frac{1}{Q}\mathcal{Q}^\frac{Q}{2}$ and $\mu\in(0,\mu^*)$.
\end{rmk}
In order to show that the functional exhibits Mountain Pass Geometry, the following result is essential. This theorem requires the assumption ($g_2$).
\begin{lem}\label{MPL}
    We assume that all the assumptions of Theorem \ref{MT2} are satisfied and $u_\mu$ is the solution of \textcolor{black}{problem (\ref{ME1})} obtained in Lemma \ref{Fsol}. Then for every $\mu\in(0,\mu^*)$, there exists $\epsilon>0$ such that 
    \begin{align}
        \max_{t>0}I_\mu(u_\mu+tw_\epsilon) < I_\mu(u_\mu)+\frac{1}{Q}\mathcal{S}_\lambda^\frac{Q}{2},
    \end{align}
    where $w_\epsilon$ defined in (\ref{Asy}) and $\mu^*$ is obtained in Lemma \ref{Fsol}.
\end{lem}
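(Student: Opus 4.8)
The plan is to reduce everything to the scalar function $\psi(t)=I_\mu(u_\mu+tw_\epsilon)$, $t\ge0$, and to bound $\sup_{t\ge0}\psi(t)$ strictly below $I_\mu(u_\mu)+\frac1Q\mathcal S_\lambda^{Q/2}$. Since $u_\mu\ge0$ and $w_\epsilon\ge0$, for $t>0$ we have $(u_\mu+tw_\epsilon)^+=u_\mu+tw_\epsilon$, so the truncation in $I_\mu$ disappears and $\psi$ is smooth; because the critical term forces $\psi(t)\to-\infty$ as $t\to\infty$ while $\psi(0)=I_\mu(u_\mu)$, the supremum is attained at some finite $t_\epsilon\ge0$. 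The crude bound derived below, $\psi(t)\le I_\mu(u_\mu)+\frac{t^2}{2}\|w_\epsilon\|_\lambda^2-\frac{t^{2^*_\lambda}}{2^*_\lambda}$, together with $\|w_\epsilon\|_\lambda^2\to\mathcal S_\lambda$, confines the relevant maximizer to a fixed interval $[t_1,t_2]\subset(0,\infty)$: for $t\le t_1$ one has $\psi(t)\le I_\mu(u_\mu)+\frac{t_1^2}{2}\mathcal S_\lambda+o(1)$, which is below the threshold once $t_1$ is chosen small, and for $t\ge t_2$ one has $\psi(t)<I_\mu(u_\mu)$. Hence it suffices to beat the threshold uniformly on $[t_1,t_2]$.

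The algebraic heart is to use that $u_\mu$ is a critical point, i.e. $\langle I'_\mu(u_\mu),u_\mu\rangle=0$ and $\langle I'_\mu(u_\mu),w_\epsilon\rangle=0$. Substituting these into the expansion of $\psi(t)-I_\mu(u_\mu)$ leaves a quadratic term $\frac{t^2}{2}\|w_\epsilon\|_\lambda^2$, a linear cross term $t\int_\Omega u_\mu^{2^*_\lambda-1}w_\epsilon\,dz$, a concave cross term $t\mu\int_\Omega g\,u_\mu^{r}w_\epsilon\,dz$, and the two increments $-\frac{\mu}{r+1}\big(\int_\Omega g(u_\mu+tw_\epsilon)^{r+1}-\int_\Omega g\,u_\mu^{r+1}\big)$ and $-\frac{1}{2^*_\lambda}\big(\int_\Omega(u_\mu+tw_\epsilon)^{2^*_\lambda}-\int_\Omega u_\mu^{2^*_\lambda}\big)$. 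Convexity of $s\mapsto s^{r+1}$ (here $r+1\ge1$) together with $g\ge0$ gives the tangent–line estimate $\int_\Omega g(u_\mu+tw_\epsilon)^{r+1}\ge\int_\Omega g\,u_\mu^{r+1}+(r+1)t\int_\Omega g\,u_\mu^{r}w_\epsilon$, so the whole concave contribution is $\le0$ and exactly cancels the concave cross term. For the critical increment I use $(a+b)^p\ge a^p+b^p+p\,a^{p-1}b$ ($p=2^*_\lambda\ge2$, $a,b\ge0$), which cancels $t\int_\Omega u_\mu^{2^*_\lambda-1}w_\epsilon$ against $-\frac{1}{2^*_\lambda}\cdot 2^*_\lambda t\int_\Omega u_\mu^{2^*_\lambda-1}w_\epsilon$ and leaves a nonnegative remainder
\[
R(t)=\int_\Omega\Big[(u_\mu+tw_\epsilon)^{2^*_\lambda}-u_\mu^{2^*_\lambda}-(tw_\epsilon)^{2^*_\lambda}-2^*_\lambda\,u_\mu^{2^*_\lambda-1}(tw_\epsilon)\Big]\,dz\ge0 .
\]
After all cancellations one arrives at $\psi(t)\le I_\mu(u_\mu)+\frac{t^2}{2}\|w_\epsilon\|_\lambda^2-\frac{t^{2^*_\lambda}}{2^*_\lambda}-\frac{1}{2^*_\lambda}R(t)$, so the decisive gain must be squeezed out of $R(t)$.

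To extract that gain I keep the second cross term. On the set $\{u_\mu\le tw_\epsilon\}$ the sharper elementary inequality $(a+b)^p\ge a^p+b^p+p\,a^{p-1}b+c_p\,ab^{p-1}$, valid for $0\le a\le b$ and $p\ge2$, yields $R(t)\ge c_p\,t^{2^*_\lambda-1}\int_{B_\delta(z_0)}u_\mu\,w_\epsilon^{2^*_\lambda-1}\,dz$, since a fixed ball $B_\delta(z_0)$ lies in $\{u_\mu\le tw_\epsilon\}$ once $\epsilon$ is small. This is where positivity enters: because $B_{2R}(z_0)\Subset\Omega\setminus\Sigma$, the operator $\Delta_\lambda$ is uniformly elliptic there, and as $g\ge0$ the right-hand side of $(E_\mu)$ is nonnegative, so local elliptic regularity and the strong maximum principle force the nontrivial solution $u_\mu$ to be strictly positive on $B_{2R}(z_0)$; hence $u_\mu\ge v_0>0$ near $z_0$ and $\int_{B_\delta(z_0)}u_\mu w_\epsilon^{2^*_\lambda-1}\ge v_0\int_{B_\delta(z_0)}w_\epsilon^{2^*_\lambda-1}\ge c\,\epsilon^{(Q-2)/2}$, the last step being the matching lower bound to Proposition \ref{Pro1}(iv) with $\gamma=2^*_\lambda-1$ (note $\frac{2^*_\lambda}{2}<2^*_\lambda-1<2^*_\lambda$ and $Q-\frac{\gamma(Q-2)}{2}=\frac{Q-2}{2}$), obtained by the same scaling since the integral over a fixed ball converges to a positive multiple of $\epsilon^{(Q-2)/2}$.

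Finally, Proposition \ref{Pro1}(i)–(ii) give $\|w_\epsilon\|_\lambda^2=\frac{\|u_\epsilon\|_\lambda^2}{\|u_\epsilon\|_{2^*_\lambda}^2}\le\mathcal S_\lambda+O(\epsilon^{Q-2})$, so maximizing the quadratic–critical part gives $\frac{t^2}{2}\|w_\epsilon\|_\lambda^2-\frac{t^{2^*_\lambda}}{2^*_\lambda}\le\frac1Q\|w_\epsilon\|_\lambda^{Q}\le\frac1Q\mathcal S_\lambda^{Q/2}+C\epsilon^{Q-2}$. Combining, for every $t\in[t_1,t_2]$,
\[
\psi(t)\le I_\mu(u_\mu)+\frac1Q\mathcal S_\lambda^{Q/2}+C\epsilon^{Q-2}-c'\epsilon^{(Q-2)/2},
\]
and since $\frac{Q-2}{2}<Q-2$ the negative term dominates for $\epsilon$ small, giving the strict inequality. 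I expect the main obstacle to be exactly this competition of scales: making the positive cross term $\int u_\mu w_\epsilon^{2^*_\lambda-1}\sim\epsilon^{(Q-2)/2}$ genuinely appear so that it outweighs the $O(\epsilon^{Q-2})$ error from the energy of $w_\epsilon$. This forces the two delicate ingredients — the domain-split use of the refined elementary inequality to retain the $ab^{p-1}$ term, and the local strict positivity $u_\mu(z_0)>0$ (which relies on $z_0\notin\Sigma$ and $g\ge0$, the global degeneracy on $\Sigma$ being irrelevant on $B_{2R}(z_0)$).
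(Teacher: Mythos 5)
Your plan is, at its core, the paper's own argument: expand $I_\mu(u_\mu+tw_\epsilon)$, cancel the linear terms via $\langle I'_\mu(u_\mu),tw_\epsilon\rangle=0$, absorb the concave increment by convexity of $\sigma\mapsto\sigma^{1+r}$ together with $g\ge 0$, extract from the critical term a favourable cross term of order $t^{2^*_\lambda-1}\epsilon^{\frac{Q-2}{2}}$, and let it dominate the $O(\epsilon^{Q-2})$ error in $\|w_\epsilon\|_\lambda^2\le\mathcal{S}_\lambda+O(\epsilon^{Q-2})$ after confining the maximizing $t$ to a compact subinterval of $(0,\infty)$. The only structural difference is the device used for the extraction: the paper applies a global pointwise inequality (its display (\ref{san}), from Silva's Lemma 2.4, which keeps the cross term everywhere at the cost of a harmless $O(\epsilon^{Q/2})$ correction), whereas you split the domain and use the refined inequality $(a+b)^p\ge a^p+b^p+pa^{p-1}b+c_p\,ab^{p-1}$ on $\{a\le b\}$; that is cosmetic (note only that this inequality holds with $c_p>0$ for $p>2$, not $p=2$ as written --- enough here since $2^*_\lambda>2$). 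In one respect you are actually \emph{more} careful than the paper: the proof needs a \emph{lower} bound $\int u_\mu w_\epsilon^{2^*_\lambda-1}\,dz\ge c\,\epsilon^{\frac{Q-2}{2}}$, hence strict positivity of $u_\mu$ near $z_0$; the paper only records the upper bound (\ref{san2}) and the $L^\infty$-bound on $u_\mu$, and then uses the lower bound silently when writing $-C_2t^s\epsilon^{\frac{Q-2}{2}}$ in (\ref{A}). Your appeal to uniform ellipticity on $B_{2R}(z_0)\Subset\Omega\setminus\Sigma$ and the strong maximum principle is the right way to supply this (one should still add a sentence ruling out the alternative $u_\mu\equiv 0$ on that ball, e.g.\ by connectedness of $\Omega\setminus\Sigma$ when $n\ge 2$; the paper is silent on this too).

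There is, however, one step in your write-up that is false as stated: a \emph{fixed} ball $B_\delta(z_0)$ is \emph{not} contained in $\{u_\mu\le tw_\epsilon\}$ for small $\epsilon$ and $t$ in a bounded interval. Indeed, if it were, then since (by your own maximum-principle step) $u_\mu\ge v_0>0$ on $B_{2R}(z_0)$, one would have $w_\epsilon\ge v_0/t_2$ on all of $B_\delta(z_0)$, whence
\begin{align*}
\int_\Omega w_\epsilon^{2^*_\lambda-1}\,dz\ \ge\ \left(\frac{v_0}{t_2}\right)^{2^*_\lambda-1}|B_\delta(z_0)|,
\end{align*}
a positive constant; but Proposition \ref{Pro1}(iv) with $\gamma=2^*_\lambda-1$ forces this integral to be $O(\epsilon^{\frac{Q-2}{2}})\to 0$. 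The bubble $w_\epsilon$ is large only at scale $\epsilon$ around $z_0$ (at fixed distance it is of size $\epsilon^{\frac{Q-2}{2}}$), so $\{u_\mu\le tw_\epsilon\}$ shrinks to a region of $\rho$-radius $O(\epsilon^{1/2})$. The repair is immediate and uses nothing beyond your own scaling argument: replace $B_\delta(z_0)$ by $B_{K\epsilon}(z_0)$, on which $w_\epsilon\ge c_K\,\epsilon^{-\frac{Q-2}{2}}$, so that $B_{K\epsilon}(z_0)\subset\{u_\mu\le tw_\epsilon\}$ for every $t\ge t_1$ once $\epsilon$ is small (because $u_\mu$ is bounded on $B_{2R}(z_0)$), and then
\begin{align*}
\int_{B_{K\epsilon}(z_0)}u_\mu\,w_\epsilon^{2^*_\lambda-1}\,dz\ \ge\ v_0\left(c_K\epsilon^{-\frac{Q-2}{2}}\right)^{2^*_\lambda-1}|B_{K\epsilon}(z_0)|\ \ge\ c\,v_0\,\epsilon^{-\frac{Q+2}{2}}\,\epsilon^{Q}\ =\ c\,v_0\,\epsilon^{\frac{Q-2}{2}},
\end{align*}
since the gauge ball has measure comparable to $(K\epsilon)^Q$. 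With this single substitution your chain of estimates, and hence the strict inequality claimed in the Lemma, goes through.
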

\begin{proof}
Since the function $g$ satisfies $(g_2)$ and the Grushin operator $\Delta_\lambda$ is uniformly elliptic away from $x=0$ plane, well-established elliptic theory guarantees that $u_\mu\in L^\infty(B_{2R}(z_0))$ (this ball is defined in condition $g_2$). We choose this specific ball and define the functions $u_\epsilon$ and $w_\epsilon$, as introduced in the preliminary section (see (\ref{Asy})). Since $u_\mu$ and $w_\epsilon$ are non-negative, we have
\begin{align}
    I_\mu(u_\mu+tw_\epsilon)&=\frac{1}{2}\int_\Omega |\nabla_\lambda u_\mu+t \nabla_\lambda w_\epsilon|^2-\frac{\mu}{1+r}\int_\Omega g\cdot (u_\mu+tw_\epsilon)^{1+r}\, dz\nonumber\\
    &-\frac{1}{2^*_\lambda}\int_\Omega  (u_\mu+tw_\epsilon)^{2^*_\lambda}\, dz\nonumber\\
    &=\frac{1}{2}\int_\Omega|\nabla_\lambda u_\mu|^2+\frac{t^2}{2}\int_\Omega|\nabla_\lambda w_\epsilon|^2\, dz+t\int_\Omega \nabla_\lambda u_\mu\cdot \nabla_\lambda w_\epsilon dz\nonumber\\
    &-\frac{\mu}{1+r}\int_\Omega g\cdot (u_\mu+tw_\epsilon)^{1+r}\, dz
    -\frac{1}{2^*_\lambda}\int_\Omega  (u_\mu+tw_\epsilon)^{2^*_\lambda}\, dz
\end{align}
Using \cite[ Lemma 2.4]{Silva} with $\sigma=\frac{2^*_\lambda}{2}$, i.e., $(1+t)^{2^*_\lambda}\geq 1+t^{2^*_\lambda}+{2^*_\lambda}t+2^*_\lambda t^{2^*_\lambda-1}-Ct^\frac{Q}{Q-2}$; the fact $\langle I'_\mu(u_\mu),tw_\epsilon\rangle=0$, and $\|w_\epsilon\|_{2^*_\lambda}=1$, we deduce
\begin{align}
   I_\mu(u_\mu+tw_\epsilon)&\leq I_\mu(u_\mu)+\frac{t^2}{2}\int_\Omega|\nabla_\lambda w_\epsilon|^2\, dz-\frac{t^{2^*_\lambda}}{2^*_\lambda}\nonumber\\
    &-\frac{\mu}{1+r}\int_\Omega g\cdot\left ((u_\mu+tw_\epsilon)^{1+r}-u_\mu^{1+r}-(1+r)u_\mu^rtw_\epsilon\right )\, dz\nonumber\\
    &-t^{2^*_\lambda-1}\int_\Omega u_\mu w_\epsilon^{2^*_\lambda-1}\, dz+\frac{Ct^\frac{Q}{Q-2}}{2^*_\lambda}\int_\Omega (u_\mu w_\epsilon)^\frac{Q}{Q-2}\, dz.
\end{align}
Since $\left ((u_\mu+tw_\epsilon)^{1+r}-u_\mu^{1+r}-(1+r)u_\mu^rtw_\epsilon\right )\geq 0$ and $g\geq 0$, we have
\begin{align}\label{san}
    I_\mu(u_\mu+tw_\epsilon)&\leq  I_\mu(u_\mu)+\frac{t^2}{2}\int_\Omega|\nabla_\lambda w_\epsilon|^2\, dz-\frac{t^{2^*_\lambda}}{2^*_\lambda}-t^{2^*_\lambda-1}\int_{\{z\in\Omega: u_\mu(z)\leq 1\}} u_\mu w_\epsilon^{2^*_\lambda-1}\, dz\nonumber\\
    &+\frac{Ct^\frac{Q}{Q-2}}{2^*_\lambda}\int_\Omega (u_\mu w_\epsilon)^\frac{Q}{Q-2}\, dz.
\end{align}
 Since $u_\mu$ is bounded in $B_{2R}(z_0)$, applying proposition \ref{Pro1} with $\gamma=2^*_\lambda-1$ and $\gamma=\left (\frac{Q}{Q-2}\right )$ respectively, we obtain
\begin{align}\label{san2}
    \int_{\{z\in\Omega: u_\mu(z)\leq 1\}} u_\mu w_\epsilon^{2^*_\lambda-1}\, dz\leq\int_\Omega w_\epsilon^{2^*_\lambda-1}\, dz=O(\epsilon^\frac{Q-2}{2})
\end{align}
and
\begin{align}\label{san3}
    \int_\Omega (u_\mu w_\epsilon)^\frac{Q}{Q-2}\, dz\leq O(\epsilon^\frac{Q}{2}).
\end{align}
We also deduce from the proposition \ref{Pro1} that
\begin{align}
    \int_\Omega|\nabla_\lambda w_\epsilon|^2\leq \mathcal{S}_\lambda+ O(\epsilon^{Q-2}).
\end{align}
Combining these facts, (\ref{san}) yields
\begin{align}\label{A}
    I_\mu(u_\mu+tw_\epsilon)&\leq  I_\mu(u_\mu)+\underbrace{\frac{\mathcal{S}_\lambda t^2}{2}-\frac{t^{2^*_\lambda}}{2^*_\lambda}+C_1t^2\epsilon^{Q-2}-C_2t^{2^*_\lambda-1} \epsilon^\frac{Q-2}{2}+C_3t^\frac{Q}{Q-2} \epsilon^\frac{Q}{2}}_{F_\epsilon(t)},
\end{align}
for some constants $C_1,\;C_2,\;C_3>0.$ Observe that $F_\epsilon(t)\to 0$ and $-\infty$ as $t\to 0$ and $\infty$ respectively, and thus there exists $t_\epsilon>0$ such that $F_\epsilon(t_\epsilon)=\max_{t\geq 0}F_\epsilon(t)$. It is also clear from the fact (\ref{MP1}) that $\{t_\epsilon\}$ is bounded below by some $t_*>0$. Our next claim is that the above sequence is bounded above also. If possible, let up to a subsequence $t_{\epsilon}\to\infty$ as $\epsilon\to 0$. Since $F'_{\epsilon}(t_\epsilon)=0$, we conclude that
\begin{align}
\mathcal{S}_\lambda t_\epsilon+2C_1t_\epsilon\epsilon^{Q-2}+\frac{Q}{Q-2}C_3t_\epsilon^\frac{2}{Q-2}\epsilon^\frac{Q}{2}=t_\epsilon^{2^*_\lambda-1}+{2^*_\lambda}C_2t_\epsilon^{2^*_\lambda-2} \epsilon^\frac{Q-2}{2}.    
\end{align}
After dividing both sides by $t_\epsilon^{2^*_\lambda-1}$ and taking $\epsilon\to 0$, we obtain a contradiction. Thus, \{$t_\epsilon$\} is bounded above by some $t^*>0$. Thus, the inequality (\ref{A}) ensures that for small $\epsilon>0$, we have
\begin{align}
   \max_{t\geq 0} I_\mu(u_\mu+tw_\epsilon)&\leq I_\mu(u_\mu)+\max_{t\geq 0}\left( \frac{\mathcal{S}_\lambda t^2}{2}-\frac{t^{2^*_\lambda}}{2^*_\lambda}\right )\nonumber\\
    &+\underbrace{C_1(t^*)^2\epsilon^{Q-2}-C_2(t_*)^{2^*_\lambda-1} \epsilon^\frac{Q-2}{2}+C_3(t^*)^\frac{Q}{Q-2} \epsilon^\frac{Q}{2}}_{=H(\epsilon)}\nonumber\\
    &< I_\mu(u_\mu)+\frac{1}{Q}\mathcal{S}_\lambda^\frac{Q}{2}.
\end{align}
 We use the facts $\max_{t\geq 0}\left( \frac{\mathcal{S}_\lambda t^2}{2}-\frac{t^{2^*_\lambda}}{2^*_\lambda}\right )=\frac{1}{Q}\mathcal{S}_\lambda^\frac{Q}{2}$ and $H(\epsilon)<0$ for small enough $\epsilon>0$. 
\end{proof}
\textbf{Proof of Theorem \ref{MT2}:}
We prove this theorem by contradiction. Suppose $\mu\in(0,\mu^*)$ and $u_\mu$ is the only non-zero critical point of the functional $I_\mu$, which is obtained in Lemma \ref{Fsol}. Since $I_\mu(u_\mu)<0$, from the definition of $\Tilde{c}$, we have
\begin{align}
    \Tilde{c}=I_\mu(u_\mu)+\frac{1}{Q}\mathcal{S}_\lambda^\frac{Q}{2}.
\end{align}
We fix the function $w_\epsilon$ obtained in Lemma \ref{MPL}. From the fact (\ref{san}), we observe that $\lim_{t\to\infty}I_\mu(u_\mu+tw_\epsilon)=-\infty$, and hence there exists $T>0$ such that $I_\mu(u_\mu+Tw_\epsilon)<I_\mu(u_\mu)$ and $\|u_\mu+Tw_\epsilon\|>\delta$. Define
 $$\Gamma:=\{\, \gamma\;| \gamma:[0,1]\to H^{1,\lambda}_0(\Omega) \text{ is a continuous map such that }\gamma(0)=u_\mu \text{ and } \gamma(1)=u_\mu+Tw_\epsilon\}.$$
 Using (\ref{MP1}) and Lemma \ref{MPL}, we obtain
 $$ 0<\frac{\delta^2}{8}\leq c_\mu:=\inf_{\gamma\in\Gamma}\max_{t\in[0,1]} I_\mu(\gamma(t))<\Tilde{c}\textcolor{black}{.}$$
\textcolor{black}{Hence, by the Mountain Pass Theorem, we obtain another critical point $\Tilde{u}$ of the functional $I_\mu$ with energy $I_\mu(\Tilde{u})=c_\mu>0$. Since $c_\mu>0$, one can conclude that $\Tilde{u}\not\equiv 0$.} Thus, $\Tilde{u}\neq u_\mu,$ which contradicts our assumption. Consequently, the problem (\ref{ME1}) admits at least two non-zero, non-negative solutions. In particular, when $n=1,\lambda\geq 1$, then \cite[Theorem 2.6]{KS} guarantees that the solutions are strictly positive.\\
\section*{Acknowledgment:} The authors would like to express their sincere gratitude to Prof. Adimurthi for his invaluable suggestions and support. 

\section*{Declaration of Interest:} The authors declare no competing interests.
\nocite{*}
\bibliographystyle{plain}
\bibliography{References.bib}
\end{document}